\theoremstyle{plain}
\newtheorem{theorem}{Theorem}[section]
\newtheorem{lemma}[theorem]{Lemma}
\newtheorem{proposition}[theorem]{Proposition}
\newtheorem{corollary}[theorem]{Corollary}
\theoremstyle{definition}
\theoremstyle{remark}
\begin{document}

\title[Continuity of the continued fraction mapping revisited]{Continuity of the continued fraction mapping revisited}

\author{Min Woong Ahn}
\address{Department of Mathematics, SUNY at Buffalo, Buffalo, NY 14260-2900, USA}
\email{minwoong@buffalo.edu}
\curraddr{Department of Mathematics Education, Silla University, 140, Baegyang-daero 700beon-gil, Sasang-gu, Busan 46958, Republic of Korea}
\email{minwoong@silla.ac.kr}

\date{\today}

\subjclass[2020]{Primary 11A55; Secondary 26A15}
\keywords{Continued fraction; Continuity; Quotient space}

\begin{abstract}
The continued fraction mapping maps a number in the interval $[0,1)$ to the sequence of its partial quotients. When restricted to the set of irrationals, which is a subspace of the Euclidean space $\mathbb{R}$, the continued fraction mapping is a homeomorphism onto the product space $\mathbb{N}^{\mathbb{N}}$, where $\mathbb{N}$ is a discrete space. In this short note, we examine the continuity of the continued fraction mapping, addressing both irrational and rational points of the unit interval.
\end{abstract}

\maketitle

\tableofcontents

\section{Introduction} \label{Introduction}

Given $x \in (0,1)$, the continued fraction expansion is expressed as:
\begin{align*}
x = [d_1(x), d_2(x), \dotsc] \coloneqq \cfrac{1}{d_1(x)+\cfrac{1}{d_2(x)+\cfrac{1}{d_3(x)+\ddots}}},
\end{align*}
where the $d_n(x)$ are the partial quotients. For irrationals, this expansion is infinite, forming sequences in $\mathbb{N}^{\mathbb{N}}$; for rational numbers, the expansion terminates after a finite number of terms, say $n$, with the condition $d_n(x) \geq 2$. This dual behavior motivates a detailed exploration of the continuity of the continued fraction mapping across the unit interval.

While the continued fraction mapping is a homeomorphism when restricted to the irrationals, its behavior at rational points is discontinuous in the classical sense. By redefining the codomain topology, one-sided continuity can be established at rational points. This paper develops such a topological framework, demonstrating how it simplifies analysis while maintaining rigor.

Topological insights into continued fractions have practical implications. For instance, Ridley and Petruska \cite{RP00} introduced the error-sum function of continued fractions and studied it directly on the unit interval, requiring heavy machinery and complex calculations. In \cite{Ahn23}, we investigated the error-sum function in the context of Pierce expansions, leveraging the topological viewpoint to simplify the analysis. This approach significantly reduced computational complexity while retaining mathematical rigor. Similarly, in the context of continued fractions, adopting a topological viewpoint holds significant potential for unifying the behaviors at rational and irrational points, offering deeper insights into the structure of the mapping and streamlining practical computations.

Throughout the paper, we denote by $\mathbb{N}$ the set of positive integers, $\mathbb{N}_0$ the set of non-negative integers, and $\mathbb{N}_\infty \coloneqq \mathbb{N} \cup \{ \infty \}$ the set of extended positive integers. Let $\mathbb{I} \coloneqq [0,1) \setminus \mathbb{Q}$, i.e., the set $\mathbb{I}$ is the set of irrationals in $[0,1)$. The Fibonacci sequence is denoted by $(F_n)_{n \in \mathbb{N}_0}$, with the recursive definition $F_{n+2} \coloneqq F_{n+1} + F_n$ for each $n \in \mathbb{N}_0$, where $F_0 \coloneqq 0$ and $F_1 \coloneqq 1$. Specifically, Binet's formula tells us that 
\begin{align} \label{Binet formula}
F_n = \frac{1}{\sqrt{5}} \left[ \left( \frac{1+\sqrt{5}}{2} \right)^n - \left( \frac{1-\sqrt{5}}{2} \right)^{n} \right] \quad \text{for each } n \in \mathbb{N}_0.
\end{align}

\section{Preliminaries of continued fractions} \label{Preliminaries}

In this section, we list some basic properties of continued fractions, most of which can be found in the classical textbook on number theory by Hardy and Wright \cite{HW08}, as well as in books more focused on the theory of continued fractions by Iosifescu and Kraaikamp \cite{IK02} and Khinchin \cite{Khi97}.

For each $x \in [0,1]$, define
\[
\overline{d}_1(x) \coloneqq 
\begin{cases} \lfloor 1/x \rfloor, &\text{if } x \neq 0; \\ \infty, &\text{if } x = 0, \end{cases}
\quad \text{ and } \quad
\overline{T}(x) \coloneqq
\begin{cases} 1/x - \lfloor 1/x \rfloor, &\text{if } x \neq 0; \\ 0, &\text{if } x = 0. \end{cases}
\]
Furthermore, for each $x \in [0,1]$ and $n \in \mathbb{N}$, let
\[
\overline{d}_n (x) \coloneqq \overline{d}_1 (\overline{T}^{n-1}(x)).
\]
Define $f \colon [0,1] \to \mathbb{N}_\infty^\mathbb{N}$ by
\[
f(x) \coloneqq (\overline{d}_k(x))_{k \in \mathbb{N}} = (\overline{d}_1(x), \overline{d}_2(x), \overline{d}_3(x), \dotsc)
\]
for each $x \in [0,1]$. Clearly, $f$ is well-defined by definition. Moreover, $f$ extends the continued fraction mapping, as it satisfies $f(x) = (d_k(x))_{k \in \mathbb{N}}$ for any irrational $x \in (0,1)$. For a rational $x \in (0,1)$ with the continued fraction expansion $x = [d_1(x), \dotsc, d_n(x)]$, $f(x)$ is given by $(d_1(x), \dotsc, d_n(x), \infty, \infty, \infty, \dotsc)$ with $d_n(x) \geq 2$. If we adopt the convention $1/\infty \coloneqq 0$, it is evident that $f$ consistently yields the partial quotients.

We shall consider a set closely related to the sequence of partial quotients of continued fractions. Put $\Sigma \coloneqq \Sigma_0 \cup \bigcup_{n \in \mathbb{N}} \Sigma_n \cup \Sigma_\infty$, where
\begin{align*}
\Sigma_0 &\coloneqq \{ \infty \}^\mathbb{N}, \\
\Sigma_n &\coloneqq \mathbb{N}^{\{ 1, \dotsc, n \}} \times \{ \infty \}^{\mathbb{N} \setminus \{ 1, \dotsc, n \}}, \quad \forall n \in \mathbb{N}, \\
\Sigma_\infty &\coloneqq \mathbb{N}^\mathbb{N}.
\end{align*}

For each $\sigma \coloneqq (\sigma_k)_{k \in \mathbb{N}} \in \mathbb{N}_\infty^{\mathbb{N}}$, put
\[
\widetilde{\varphi}_k (\sigma) \coloneqq [\underbrace{\sigma_1, \sigma_2, \dotsc, \sigma_k}_{\text{$k$ terms}}, \infty, \infty, \dotsc] = \cfrac{1}{\sigma_1 + \cfrac{1}{\sigma_2 + \cfrac{1}{\ddots+\cfrac{1}{\sigma_k}}}}
\]
for each $k \in \mathbb{N}_0$. We emphasize that $\widetilde{\varphi}_k$ is defined on the entire space $\mathbb{N}_\infty^{\mathbb{N}}$ rather than being restricted to $\Sigma_k$. For example, $\widetilde{\varphi}_{2}(3, \infty, 4, \infty, \infty, \dotsc) = [3, \infty, \infty, \dotsc] = 1/3$, and $\widetilde{\varphi}_{2}(\infty, \infty, \infty, \dotsc) = [\infty, \infty, \dotsc] = 0$. Now, define $\widetilde{\varphi} \colon \mathbb{N}_\infty^{\mathbb{N}} \to [0,1]$ by
\[
\widetilde{\varphi} (\sigma) \coloneqq \lim_{k \to \infty} \widetilde{\varphi}_k (\sigma)
\]
for each $\sigma \in \mathbb{N}_\infty^{\mathbb{N}}$. 

\begin{lemma} \label{observation lemma}
For any $\sigma \in \mathbb{N}_\infty^{\mathbb{N}} \setminus \Sigma_\infty$, there exists an $n \in \mathbb{N}$ such that $\widetilde{\varphi}(\sigma) = \widetilde{\varphi}_n (\sigma)$.
\end{lemma}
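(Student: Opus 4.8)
The plan is to locate the first coordinate of $\sigma$ equal to $\infty$ and show that, from that position onward, the truncations $\widetilde{\varphi}_k(\sigma)$ stabilize. Since $\sigma \in \mathbb{N}_\infty^{\mathbb{N}} \setminus \Sigma_\infty$, at least one coordinate equals $\infty$, so the set $\{ k \in \mathbb{N} : \sigma_k = \infty \}$ is nonempty; I would let $n$ be its minimum, so that $\sigma_1, \dotsc, \sigma_{n-1} \in \mathbb{N}$ while $\sigma_n = \infty$. The key claim I would prove is that $\widetilde{\varphi}_k(\sigma) = [\sigma_1, \dotsc, \sigma_{n-1}]$ for every $k \geq n$, where for $n = 1$ the empty continued fraction is read as $\widetilde{\varphi}_0(\sigma) = 0$.

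To establish this claim I would fix $k \geq n$ and evaluate the finite continued fraction $\widetilde{\varphi}_k(\sigma)$ from the innermost term outward. Introducing the tail values $\alpha_k \coloneqq \sigma_k$ and $\alpha_j \coloneqq \sigma_j + 1/\alpha_{j+1}$ for $1 \leq j < k$, one has $\widetilde{\varphi}_k(\sigma) = 1/\alpha_1$. Each $\alpha_j$ with $j \geq n+1$ lies in $[1,\infty]$ since its leading summand $\sigma_j$ is at least $1$ (or is $\infty$); in particular $1/\alpha_{n+1} \in [0,1]$ is finite. Applying the conventions $1/\infty = 0$ and $\infty + t = \infty$ for finite $t \geq 0$ then gives $\alpha_n = \infty + 1/\alpha_{n+1} = \infty$, hence $\alpha_{n-1} = \sigma_{n-1} + 1/\alpha_n = \sigma_{n-1}$, and from this position downward only the finite entries $\sigma_1, \dotsc, \sigma_{n-1}$ survive, so $1/\alpha_1 = [\sigma_1, \dotsc, \sigma_{n-1}]$ regardless of $k$. (For $n = 1$ this collapses to $\alpha_1 = \infty$ and $\widetilde{\varphi}_k(\sigma) = 0$.)

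With the claim in hand, the sequence $(\widetilde{\varphi}_k(\sigma))_{k \in \mathbb{N}}$ is eventually constant, so its limit exists and $\widetilde{\varphi}(\sigma) = [\sigma_1, \dotsc, \sigma_{n-1}]$. Finally I would observe that $\widetilde{\varphi}_n(\sigma) = [\sigma_1, \dotsc, \sigma_{n-1}, \infty]$, whose trailing $\infty$ again contributes $1/\infty = 0$ to the $(n-1)$-st level, so $\widetilde{\varphi}_n(\sigma) = [\sigma_1, \dotsc, \sigma_{n-1}] = \widetilde{\varphi}(\sigma)$, which is exactly the assertion.

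I expect the only delicate point to be the bookkeeping of the extended arithmetic: one must verify that $1/\alpha_{n+1}$ is genuinely finite so that the step $\infty + 1/\alpha_{n+1} = \infty$ is legitimate, and treat the degenerate case $n = 1$ separately. Beyond that, the argument is a direct backward evaluation of a finite continued fraction, and no limiting or topological input is needed.
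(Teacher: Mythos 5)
Your proof is correct and takes essentially the same route as the paper's: both rest on the single observation that the conventions $1/\infty = 0$ and $\infty + r = \infty$ force every truncation $\widetilde{\varphi}_k(\sigma)$ with $k \geq n$ to collapse to the same value once an index $n$ with $\sigma_n = \infty$ is reached. The paper takes an arbitrary such $n$ and notes $\widetilde{\varphi}_n(\sigma) = \widetilde{\varphi}_{n+j}(\sigma)$ in one line, whereas you take the minimal one and spell out the backward evaluation with the tail values $\alpha_j$; your extra bookkeeping (and the identification of the common value as $[\sigma_1, \dotsc, \sigma_{n-1}]$) is sound but not needed for the statement.
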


\begin{proof}
Let $\sigma \coloneqq (\sigma_k)_{k \in \mathbb{N}} \in \mathbb{N}_\infty^{\mathbb{N}} \setminus \Sigma_\infty$. By definition, we can find an $n \in \mathbb{N}$ such that $\sigma_n = \infty$. Then, $1/\sigma_n = 0 = 1/(\sigma_n+r)$ for any $r \in \mathbb{R}$ due to convention. This implies that
\begin{align*}
\widetilde{\varphi}_n (\sigma) 
&= [\sigma_1, \dotsc, \sigma_n, \infty, \infty, \dotsc] \\
&= [\sigma_1, \dotsc, \sigma_n, \sigma_{n+1}, \dotsc, \sigma_{n+j}, \infty, \infty, \dotsc] = \widetilde{\varphi}_{n+j} (\sigma)
\end{align*}
for any $j \in \mathbb{N}$. Hence the lemma.
\end{proof}

Upon combining Lemma \ref{observation lemma} and the following proposition, we deduce that the mapping $\widetilde{\varphi} \colon \mathbb{N}_\infty^{\mathbb{N}} \to [0,1]$ is well-defined. 

\begin{proposition} [See {\cite[Proposition~1.1.2]{IK02}}] \label{definition of phi n} 
For each $\sigma \in \Sigma_\infty$, the limit $\lim_{k \to \infty} \widetilde{\varphi}_k (\sigma)$ exists and belongs to $\mathbb{I}$.
\end{proposition}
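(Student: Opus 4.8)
The plan is to run the classical convergent machinery for the regular continued fraction $[\sigma_1, \sigma_2, \dotsc]$, with the requisite growth estimate supplied by the Fibonacci lower bound already prepared through Binet's formula \eqref{Binet formula}. Fix $\sigma = (\sigma_k)_{k \in \mathbb{N}} \in \Sigma_\infty$, so that every $\sigma_k \in \mathbb{N}$. First I would introduce the convergent numerators and denominators $(p_k)$ and $(q_k)$ by setting $p_{-1} \coloneqq 1$, $p_0 \coloneqq 0$, $q_{-1} \coloneqq 0$, $q_0 \coloneqq 1$, and
\[
p_k \coloneqq \sigma_k p_{k-1} + p_{k-2}, \qquad q_k \coloneqq \sigma_k q_{k-1} + q_{k-2} \qquad (k \geq 1),
\]
and then verify by induction on $k$ that $\widetilde{\varphi}_k(\sigma) = p_k / q_k$ for every $k \in \mathbb{N}$. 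This reduces the whole statement to an analysis of the sequence $(p_k / q_k)_{k \in \mathbb{N}}$.

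Next I would record the determinant identity $p_k q_{k-1} - p_{k-1} q_k = (-1)^{k-1}$, proved by a one-line induction from the recurrence, which yields
\[
\frac{p_k}{q_k} - \frac{p_{k-1}}{q_{k-1}} = \frac{(-1)^{k-1}}{q_k q_{k-1}} \qquad (k \geq 1).
\]
Since each $\sigma_k \geq 1$, the denominators satisfy $q_k \geq q_{k-1} + q_{k-2}$ for $k \geq 2$, whence an induction comparing $(q_k)$ with the Fibonacci sequence gives $q_k \geq F_{k+1}$ for all $k \in \mathbb{N}_0$. By Binet's formula \eqref{Binet formula}, $F_{k+1} \to \infty$ at an exponential (golden-ratio) rate, so $q_k \to \infty$ and the series $\sum_k 1/(q_k q_{k-1})$ converges. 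Two consequences follow: writing $p_k / q_k$ as the telescoping sum of its increments exhibits it as a Cauchy sequence, so $\widetilde{\varphi}(\sigma) = \lim_{k \to \infty} p_k / q_k$ exists; and, because the signed increments alternate while their magnitudes decrease, the even-indexed convergents increase to this limit and the odd-indexed ones decrease to it, so $\widetilde{\varphi}(\sigma)$ lies strictly between any two consecutive convergents. In particular $p_0 / q_0 = 0 < \widetilde{\varphi}(\sigma) < 1/\sigma_1 = p_1/q_1 \leq 1$, giving membership in $(0,1)$.

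Finally, for irrationality I would argue by contradiction using the sandwiching just obtained: strict betweenness gives $\lvert \widetilde{\varphi}(\sigma) - p_k / q_k \rvert < 1/(q_k q_{k+1})$ for every $k$. If $\widetilde{\varphi}(\sigma) = a/b$ with $b \in \mathbb{N}$, then for each $k$ the integer $a q_k - b p_k$ is either zero or has absolute value at least $1$; the latter would force $1/(b q_k) \leq \lvert a/b - p_k / q_k \rvert < 1/(q_k q_{k+1})$, that is $q_{k+1} < b$, which fails for all large $k$ since $q_{k+1} \to \infty$. Hence $a q_k = b p_k$, i.e. $\widetilde{\varphi}(\sigma) = p_k / q_k$, for all large $k$; but consecutive convergents are distinct by the increment formula, contradicting this eventual constancy. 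Thus $\widetilde{\varphi}(\sigma)$ is irrational, and together with $0 < \widetilde{\varphi}(\sigma) < 1$ this places it in $\mathbb{I}$. I expect the irrationality step to be the main obstacle, as it is the only part that is neither a direct induction nor a limit computation: it requires combining the Diophantine lower bound $\lvert a q_k - b p_k \rvert \geq 1$ with the approximation estimate in precisely the right way.
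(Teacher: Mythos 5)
Your proposal is correct, but note that the paper itself offers no proof of this proposition at all: it is imported verbatim from the literature (the citation to \cite[Proposition~1.1.2]{IK02}), so there is no internal argument to compare against. What you have written is the standard textbook proof underlying that citation: the convergent recurrences, the determinant identity $p_k q_{k-1} - p_{k-1} q_k = (-1)^{k-1}$, the Fibonacci lower bound $q_k \geq F_{k+1}$, the alternating/telescoping argument for existence of the limit, and the Diophantine argument $\lvert a q_k - b p_k \rvert \geq 1$ versus $\lvert a/b - p_k/q_k \rvert < 1/(q_k q_{k+1})$ for irrationality --- all of these steps are sound, and the irrationality step is assembled correctly. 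It is worth observing that the auxiliary facts you re-derive (the recurrence for the $q_k$, the bound $q_k \geq F_{k+1}$, and the approximation inequality) are exactly the contents of Proposition~\ref{metric properties}, which the paper likewise quotes from \cite{HW08, IK02, Khi97} rather than proves; so your write-up in effect supplies self-contained proofs of both cited propositions at once. The only step you compress is the induction identifying $\widetilde{\varphi}_k(\sigma)$ with $p_k/q_k$, which, to go through cleanly, should be carried out for continued fractions whose last entry is an arbitrary positive real (so that the inductive step $[\sigma_1, \dotsc, \sigma_k, \sigma_{k+1}] = [\sigma_1, \dotsc, \sigma_k + 1/\sigma_{k+1}]$ applies); this is routine and does not affect correctness.
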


We define $\varphi \colon \Sigma \to [0,1]$ by
\begin{align} \label{definition of phi}
\varphi(\sigma) \coloneqq \widetilde{\varphi} (\sigma) \quad \text{for each } \sigma \in \Sigma. 
\end{align}

Let $\sigma \in \Sigma \setminus \Sigma_0$ be arbitrary. Since $\varphi_n(\sigma) > 0$ for all $n \in \mathbb{N}$ by definition, we write
\[
\varphi_n (\sigma) \coloneqq p_n^* (\sigma)/q_n^* (\sigma),
\]
where $p_n^*(\sigma)$ and $q_n^*(\sigma)$ are coprime positive integers. Furthermore, define $p_{-1}^*(\sigma) \coloneqq 1$, $q_{-1}^*(\sigma) \coloneqq 0$, $p_0^*(\sigma) \coloneqq 0$, and $q_0^*(\sigma) \coloneqq 1$. (Notice that this resembles the definition of the {\em convergents} $p_n(x)/q_n(x)$, $n \in \mathbb{N}_0$, which are defined in terms of numbers in $[0,1)$.)

\begin{proposition} [See \cite{HW08, IK02, Khi97}] \label{metric properties}
Let $\sigma \coloneqq (\sigma_k)_{k \in \mathbb{N}} \in \Sigma \setminus \Sigma_0$. If $\sigma \in \Sigma_n$ for some $n \in \mathbb{N}$, then the following hold:
\begin{enumerate}[label = \upshape(\roman*), ref = \roman*, leftmargin=*, widest=iii]
\item \label{metric properties 1}
$q_k^* (\sigma) = \sigma_k q_{k-1}^* (\sigma) + q_{k-2}^* (\sigma)$ for each $k \in \{ 1, \dotsc, n \}$.
\item \label{metric properties 2}
$q_k^* (\sigma) \geq F_{k+1}$ for each $k \in \{ 1, \dotsc, n \}$.
\item \label{metric properties 3}
$|\varphi (\sigma) - \varphi_k (\sigma)| \leq 1/[q_k^* (\sigma) q_{k+1}^*(\sigma)]$ for each $k \in \mathbb{N}$.
\end{enumerate}
If $\sigma \in \Sigma_\infty$, then the equality and inequalities in parts (\ref{metric properties 1})--(\ref{metric properties 3}) hold for all $k \in \mathbb{N}$.
\end{proposition}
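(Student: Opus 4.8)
The plan is to reduce all three parts to the classical recurrence for convergents, treating the cases $\sigma \in \Sigma_n$ and $\sigma \in \Sigma_\infty$ uniformly by working only with the finite truncations $\varphi_k(\sigma)$. The governing observation is that for $\sigma \in \Sigma_n$ with $1 \le k \le n$, and for $\sigma \in \Sigma_\infty$ with every $k$, the entries $\sigma_1, \dotsc, \sigma_k$ are genuine positive integers, so by the $1/\infty \coloneqq 0$ convention the tail of $\infty$'s contributes nothing and $\varphi_k(\sigma) = [\sigma_1, \dotsc, \sigma_k]$ is an ordinary finite continued fraction of positive integers. I would introduce auxiliary sequences by the recurrences $p_k \coloneqq \sigma_k p_{k-1} + p_{k-2}$ and $q_k \coloneqq \sigma_k q_{k-1} + q_{k-2}$, started from the same initial data $p_{-1} = 1$, $q_{-1} = 0$, $p_0 = 0$, $q_0 = 1$ fixed in the text, and prove by induction on $k$ that $\widetilde{\varphi}_k(\sigma) = p_k/q_k$ throughout the relevant range.

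For part (\ref{metric properties 1}) the single technical ingredient is the determinant identity $p_k q_{k-1} - p_{k-1} q_k = (-1)^{k-1}$, which I would establish by a one-line induction directly from the recurrences. This identity forces $\gcd(p_k, q_k) = 1$, so $p_k/q_k$ is already in lowest terms; since $q_k > 0$, uniqueness of the reduced representation gives $p_k = p_k^*(\sigma)$ and $q_k = q_k^*(\sigma)$. The recurrence for $q_k$ is then exactly the asserted recurrence for $q_k^*(\sigma)$, valid for $1 \le k \le n$ in the $\Sigma_n$ case and for all $k$ in the $\Sigma_\infty$ case.

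Part (\ref{metric properties 2}) follows by a direct induction built on (\ref{metric properties 1}): taking $q_{-1}^*(\sigma) = 0 = F_0$ and $q_0^*(\sigma) = 1 = F_1$ as base data and using $\sigma_k \ge 1$, I would estimate $q_k^*(\sigma) = \sigma_k q_{k-1}^*(\sigma) + q_{k-2}^*(\sigma) \ge q_{k-1}^*(\sigma) + q_{k-2}^*(\sigma) \ge F_k + F_{k-1} = F_{k+1}$, which closes the induction. For part (\ref{metric properties 3}) I would combine the determinant identity with an alternating‑series estimate. The identity yields $\varphi_{j+1}(\sigma) - \varphi_j(\sigma) = (-1)^j / [q_j^*(\sigma) q_{j+1}^*(\sigma)]$, and the recurrence shows the denominators $q_j^*(\sigma)$, hence the products $q_j^*(\sigma) q_{j+1}^*(\sigma)$, are strictly increasing in $j$ (because $q_{j+2}^*(\sigma) > q_j^*(\sigma)$). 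In the $\Sigma_\infty$ case, and likewise for $k < n$ in the $\Sigma_n$ case, I would write $\varphi(\sigma) - \varphi_k(\sigma) = \sum_{j \ge k} [\varphi_{j+1}(\sigma) - \varphi_j(\sigma)]$ as a (finite or convergent) alternating sum whose terms decrease in magnitude, so its absolute value is at most the first term $1/[q_k^*(\sigma) q_{k+1}^*(\sigma)]$; convergence of the series to $\varphi(\sigma)$ in the infinite case is supplied by Proposition \ref{definition of phi n}. For $\sigma \in \Sigma_n$ with $k \ge n$, Lemma \ref{observation lemma} gives $\varphi(\sigma) = \varphi_n(\sigma) = \varphi_k(\sigma)$, so the left-hand side vanishes and the bound is trivial.

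I expect the main obstacle to be organizational rather than deep: one must verify carefully that the $\infty$-tail entries play no role, so that the standard convergent machinery applies legitimately to each $\varphi_k(\sigma)$, and one must isolate the degenerate range $k \ge n$ in the $\Sigma_n$ case when proving part (\ref{metric properties 3}). The determinant identity is the one lever doing the real work; once it is in place, parts (\ref{metric properties 1})–(\ref{metric properties 3}) follow by routine induction and the alternating-series bound.
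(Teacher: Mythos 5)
Your proposal is correct, and it is essentially the proof the paper delegates to its references: the paper states this proposition without proof, citing Hardy--Wright, Iosifescu--Kraaikamp, and Khinchin, and your argument (the convergent recurrences with the determinant identity $p_k q_{k-1} - p_{k-1} q_k = (-1)^{k-1}$, the Fibonacci induction, and the alternating-series bound) is exactly the classical treatment found there. Your two adaptations to this paper's setting --- checking that the $\infty$-tail is inert under the $1/\infty \coloneqq 0$ convention, and disposing of the degenerate range $k \geq n$ in part (iii) via Lemma \ref{observation lemma} --- are precisely what is needed to transfer the textbook statements to the maps $\varphi_k$ and the starred denominators $q_k^*$ defined here.
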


We remark that, in the traditional sense, every rational number in $(0,1)$ admits two distinct continued fraction expansions (see \cite[Theorem~162]{HW08} or \cite[p.~3]{IK02}). This is because, for any integer $k \geq 2$, we have $1/k = 1/[(k-1)+1/1]$ with $k-1 \in \mathbb{N}$. The following proposition rephrases this fact in terms of two maps $f \colon [0,1] \to \Sigma$ and $\varphi \colon \Sigma \to [0,1]$.

\begin{proposition} \label{inverse image of phi}
For each $x \in [0,1]$, the following hold:
\begin{enumerate}[label = \upshape(\roman*), ref = \roman*, leftmargin=*, widest=ii]
\item \label{inverse image of phi 1}
If $x \in \mathbb{I} \cup \{ 0, 1 \}$, then we have $\varphi^{-1}( \{ x \} ) = \{ \sigma \}$, where $\sigma \coloneqq f(x)$. Moreover, $\sigma \in \Sigma_\infty$ if $x \in \mathbb{I}$; $\sigma = (\infty, \infty, \dotsc) \in \Sigma_0$ if $x=0$; $\sigma = (1, \infty, \infty, \dotsc) \in \Sigma_1$ if $x = 1$.
\item \label{inverse image of phi 2}
If $x \in (0,1) \cap \mathbb{Q}$, then we have $\varphi^{-1}( \{ x \} ) = \{ \sigma, \sigma' \}$, where
\begin{align*}
\sigma &\coloneqq (\underbrace{\overline{d}_1(x), \overline{d}_2(x), \dotsc, \overline{d}_{n-1}(x)}_{\text{$n-1$ terms}}, \overline{d}_n(x), \infty, \infty, \dotsc) = f(x) \in \Sigma_n \cap f([0,1]), \\
\sigma' &\coloneqq (\underbrace{\overline{d}_1(x), \overline{d}_2(x), \dotsc, \overline{d}_{n-1}(x)}_{\text{$n-1$ terms}}, \overline{d}_n(x)-1, 1, \infty, \infty, \dotsc) \in \Sigma_{n+1} \setminus f([0,1]),
\end{align*}
for some $n \in \mathbb{N}$.
\end{enumerate}
\end{proposition}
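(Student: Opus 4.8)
The plan is to reduce the whole statement to two facts. The first is the identity $\varphi(f(x)) = x$ for every $x \in [0,1]$, which guarantees that $f(x)$ is always one preimage; it follows from the definitions of $f$ and $\varphi$ (using Lemma~\ref{observation lemma} in the terminating case and the convergence of the convergents in the irrational case). The second is a trichotomy recording, for a hypothetical preimage $\tau \in \Sigma$, what value $\varphi(\tau)$ can take depending on which block of $\Sigma$ contains it, and establishing this is the first real step. If $\tau \in \Sigma_0$, then $\varphi(\tau) = 0$. If $\tau \in \Sigma_m$ for some $m \in \mathbb{N}$, then Lemma~\ref{observation lemma} gives $\varphi(\tau) = \widetilde{\varphi}_m(\tau) = [\tau_1, \dotsc, \tau_m]$ with all $\tau_i \in \mathbb{N}$, so $\varphi(\tau)$ is a rational number in $(0,1]$, equal to $1$ exactly when $\tau = (1, \infty, \infty, \dotsc)$. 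If $\tau \in \Sigma_\infty$, then Proposition~\ref{definition of phi n} gives $\varphi(\tau) \in \mathbb{I}$, hence irrational.

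With the trichotomy in hand, Part~(\ref{inverse image of phi 1}) is quick. For $x \in \mathbb{I}$ any preimage has irrational value, so it must lie in $\Sigma_\infty$, and the uniqueness of the infinite continued fraction expansion of an irrational identifies it with $f(x)$. For $x = 0$ only the $\Sigma_0$ case is compatible with the value, and $\Sigma_0$ is a singleton. For $x = 1$ only the value-$1$ case survives, forcing $\tau = (1, \infty, \infty, \dotsc) = f(1)$. In each case the stated membership ($\Sigma_\infty$, $\Sigma_0$, or $\Sigma_1$) is read off directly.

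For Part~(\ref{inverse image of phi 2}), with $x \in (0,1) \cap \mathbb{Q}$, the trichotomy excludes $\Sigma_0$ (value $0$) and $\Sigma_\infty$ (irrational value), so every preimage is a terminating sequence and the question becomes: how many finite continued fractions $[\tau_1, \dotsc, \tau_m]$ with $\tau_i \in \mathbb{N}$ equal $x$? I would first confirm that the two listed sequences qualify: $\varphi(\sigma) = \varphi(f(x)) = x$ by the identity, while $\varphi(\sigma') = x$ is the elementary relation $[a_1, \dotsc, a_{n-1}, a_n] = [a_1, \dotsc, a_{n-1}, a_n - 1, 1]$, valid precisely because $a_n = \overline{d}_n(x) \geq 2$ (the rephrasing noted in the remark preceding this proposition). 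The membership assertions then follow by inspecting $f$: canonical images of rationals in $(0,1)$ terminate in an entry $\geq 2$, so $\sigma = f(x) \in \Sigma_n \cap f([0,1])$, whereas $\sigma'$ terminates in a $1$ at position $n+1 \geq 2$ and hence is neither such a canonical image nor the exceptional $f(1) = (1, \infty, \dotsc)$, giving $\sigma' \in \Sigma_{n+1} \setminus f([0,1])$.

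The one step requiring genuine work, and where I expect the main obstacle to lie, is ruling out any further terminating preimage: I must show that a rational in $(0,1)$ admits exactly the two finite continued fraction representations above. I would prove this by induction on the length $m$, noting that since $\varphi(\tau) \in (0,1)$ and $\tau_1 \geq 1$ the first entry is forced as $\tau_1 = \lfloor 1/\varphi(\tau) \rfloor$ whenever $1/\varphi(\tau) \notin \mathbb{N}$, and then passing to the tail determined by $[\tau_2, \dotsc, \tau_m] = 1/\varphi(\tau) - \tau_1$; the delicate bookkeeping is entirely at the final digit, where the normal form ending in an entry $\geq 2$ and its companion ending in $\dotsc, a_n - 1, 1$ are exactly the two branches. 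Should a self-contained induction prove cumbersome, the cleaner route is to invoke the classical two-representation theorem (\cite[Theorem~162]{HW08} or \cite[p.~3]{IK02}) and merely translate its two outputs into $\sigma$ and $\sigma'$.
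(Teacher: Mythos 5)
Your proposal is correct, but it does considerably more than the paper, which in fact contains no proof of this proposition at all: the proposition is presented as a mere rephrasing, in terms of the maps $f$ and $\varphi$, of the classical theorem that every rational in $(0,1)$ has exactly two finite continued fraction expansions, with the work delegated to the citations \cite[Theorem~162]{HW08} and \cite[p.~3]{IK02}. Your fallback route in the final paragraph --- invoke the classical two-representation theorem and translate its outputs into $\sigma$ and $\sigma'$ --- is therefore exactly the paper's approach. What you add beyond that is a self-contained reconstruction, and it is sound: the identity $\varphi(f(x)) = x$ follows from the definitions as you say; the trichotomy by blocks ($\varphi(\tau) = 0$ on $\Sigma_0$, $\varphi(\tau)$ rational in $(0,1]$ on $\Sigma_m$ with value $1$ only for $(1,\infty,\infty,\dotsc)$, and $\varphi(\tau) \in \mathbb{I}$ on $\Sigma_\infty$ by Proposition~\ref{definition of phi n}) cleanly isolates which block a preimage can live in; and your verification of the membership claims, in particular that $\sigma'$ ends in a $1$ at position $n+1 \geq 2$ and so lies outside $f([0,1])$, makes explicit a detail the paper leaves entirely implicit. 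The one step you correctly flag as requiring genuine work --- that a rational in $(0,1)$ has \emph{only} the two finite representations, via induction on the length with the two branches appearing exactly when $1/\varphi(\tau)$ is an integer --- is the content of the cited classical theorem, so your induction sketch is an acceptable (if unfinished) substitute for the citation rather than a gap. In short: your route buys self-containedness and makes the block structure of $\Sigma$ do visible work; the paper's route buys brevity by outsourcing the number theory to standard references.
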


For a given $\sigma \coloneqq (\sigma_k)_{k \in \mathbb{N}} \in \Sigma$, we define $\sigma^{(n)} \coloneqq (\tau_k)_{k \in \mathbb{N}}\in \Sigma$, for each $n \in \mathbb{N}_0$, by 
\[
\tau_k
\coloneqq 
\begin{cases}
\sigma_k, &\text{if } k \in \{ 1, \dotsc, n \}; \\
\infty, &\text{otherwise},
\end{cases}
\quad \text{i.e.,} \quad \sigma^{(n)} \coloneqq (\underbrace{\sigma_1, \dotsc, \sigma_n}_{\text{$n$ terms}}, \infty, \infty, \dots).
\]
Note that $\sigma^{(n)}$ is not in $\Sigma_n$ in general. For instance, if $\sigma \coloneqq (3, 5, \infty, \infty, \dotsc) \in \Sigma_2 \subseteq \Sigma$, we have $\sigma^{(3)} = \sigma \not \in \Sigma_3$.

Fix $n \in \mathbb{N}$ and $\sigma \in \Sigma_n$. Define the {\em cylinder set} associated with $\sigma$ by
\[
\Upsilon_\sigma \coloneqq \{ \upsilon \in \Sigma \colon \upsilon^{(n)} = \sigma \}.
\]
We define the {\em fundamental interval of length $n$} for all $n \geq 1$ and $\sigma \in \Sigma$ by
\[
I_n(\sigma) = I(\sigma_1, \dotsc, \sigma_n) \coloneqq \{ x \in [0,1] : \overline{d}_k(x) = \sigma_k, \, 1 \leq k \leq n \}.
\]

\section{Main results and proofs} \label{Main results and proofs}

In this section, we establish and prove the main results of this paper regarding the continuity of the continued fraction mapping.

Equip $\mathbb{N}$ with the discrete topology, and denote its one-point compactification by $\mathbb{N}_\infty$. Define $\rho \colon \mathbb{N}_\infty \times \mathbb{N}_\infty \to \mathbb{R}$ by
\[
\rho (m,n) \coloneqq \begin{cases} 1/m + 1/n, &\text{if } m \neq n; \\ 0, &\text{if } m = n, \end{cases}
\]
for each $m, n \in \mathbb{N}_\infty$.

\begin{proposition} [{\cite[Lemma 3.1]{Ahn23}}] \label{definition of rho}
The mapping $\rho \colon \mathbb{N}_\infty \times \mathbb{N}_\infty \to \mathbb{R}$ is a metric on $\mathbb{N}_\infty$ and induces the topology of the one-point compactification on $\mathbb{N}_\infty$.
\end{proposition}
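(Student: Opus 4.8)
The plan is to treat the two assertions in turn: that $\rho$ is a metric, and that the topology it induces equals the one-point compactification topology on $\mathbb{N}_\infty$.

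For the metric axioms, non-negativity and symmetry are immediate from the definition together with the convention $1/\infty = 0$. For the identity of indiscernibles the only point requiring care is that $\rho(m,n) > 0$ whenever $m \neq n$: if both are finite this is clear, while if one of them is $\infty$, say $n = \infty$, then $\rho(m, \infty) = 1/m > 0$ because $m \in \mathbb{N}$. The one substantive axiom is the triangle inequality $\rho(m,p) \leq \rho(m,n) + \rho(n,p)$, which I would settle by a short case analysis. If $m = p$ the left-hand side is $0$ and nothing is needed; if $m \neq p$ but $n$ coincides with $m$ or with $p$ then one summand on the right vanishes and equality holds; and if $m, n, p$ are pairwise distinct then the right-hand side equals $(1/m + 1/n) + (1/n + 1/p) = \rho(m,p) + 2/n \geq \rho(m,p)$. (Conceptually $\rho$ is the railway metric with hub $\infty$: since $1/\infty = 0$, each finite $m$ sits at distance $1/m$ from $\infty$, and the distance between two distinct points is measured along the path through $\infty$; this is the structural reason the triangle inequality holds.)

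For the topological statement, denote by $\tau_\rho$ the metric topology and by $\tau_{oc}$ the one-point compactification topology, and recall that, $\mathbb{N}$ being discrete with compact subsets exactly the finite ones, the members of $\tau_{oc}$ are the arbitrary subsets of $\mathbb{N}$ together with the sets $\{\infty\} \cup (\mathbb{N} \setminus F)$ for $F \subseteq \mathbb{N}$ finite. I would then compute the open balls explicitly. For a finite point $n \in \mathbb{N}$, any radius $r \leq 1/n$ forces $B_\rho(n, r) = \{n\}$, since every finite $m \neq n$ has $\rho(n,m) = 1/n + 1/m > 1/n$ and $\rho(n, \infty) = 1/n$; thus each $n$ is isolated in $\tau_\rho$, matching the fact that $\{n\}$ is open in $\tau_{oc}$. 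For the point at infinity, $B_\rho(\infty, r) = \{\infty\} \cup \{m \in \mathbb{N} : m > 1/r\}$, whose complement in $\mathbb{N}$ is finite.

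To finish, I would show that $\tau_\rho$ and $\tau_{oc}$ admit a common neighborhood basis at every point, which forces $\tau_\rho = \tau_{oc}$. At each $n \in \mathbb{N}$ both topologies have the basic neighborhood $\{n\}$. At $\infty$ the balls $B_\rho(\infty, r)$ and the sets $\{\infty\} \cup (\mathbb{N} \setminus F)$ form mutually cofinal families: every $B_\rho(\infty, r)$ is already of the cofinite form, and conversely, given finite $F$, any $r$ with $1/r > \max F$ yields $B_\rho(\infty, r) \subseteq \{\infty\} \cup (\mathbb{N} \setminus F)$. Hence the neighborhood filters at $\infty$ agree as well, and the two topologies coincide. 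I expect the only real bookkeeping to lie here at $\infty$, in aligning the strict-inequality balls of the metric with the cofinite neighborhoods of the one-point compactification; the rest is routine.
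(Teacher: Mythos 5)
Your proof is correct. Note, however, that the paper never proves this proposition at all: it is quoted verbatim as Lemma~3.1 of \cite{Ahn23} (the author's earlier paper on Pierce expansions), so there is no internal argument to compare yours against --- what you have written is a self-contained substitute for the external reference. Your argument itself is sound and complete: the case analysis for the triangle inequality is exactly what is needed (the ``hub metric'' observation correctly explains why the pairwise-distinct case gives $\rho(m,p)+2/n \geq \rho(m,p)$, including when $n=\infty$); the computation of balls shows every $n \in \mathbb{N}$ is isolated and $B_\rho(\infty,r) = \{\infty\} \cup \{m \in \mathbb{N} : m > 1/r\}$ is cofinite; and matching the neighborhood filters pointwise (trivially at finite points, by mutual cofinality at $\infty$) is a legitimate way to conclude $\tau_\rho = \tau_{oc}$. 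The only microscopic omission is the degenerate case $F = \emptyset$ in the cofinality argument, where $\max F$ is undefined but any radius works; this does not affect correctness.
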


Define $\rho^\mathbb{N} \colon \mathbb{N}_\infty^{\mathbb{N}} \times \mathbb{N}_\infty^{\mathbb{N}} \to \mathbb{R}$ by
\[
\rho^\mathbb{N} (\sigma, \tau) \coloneqq \sum_{k=1}^\infty \frac{1}{F_k^2} \rho (\sigma_{k}, \tau_{k})
\]
for each $\sigma \coloneqq (\sigma_k)_{k \in \mathbb{N}}$ and $\tau \coloneqq (\tau_k)_{k \in \mathbb{N}}$ in $\mathbb{N}_\infty^{\mathbb{N}}$. The following lemma is analogous to \cite[Lemma 3.3]{Ahn23}.

\begin{lemma} \label{Sigma is metric}
The mapping $\rho^\mathbb{N} \colon \mathbb{N}_\infty^{\mathbb{N}} \times \mathbb{N}_\infty^{\mathbb{N}} \to \mathbb{R}$ is a metric on $\mathbb{N}_\infty^{\mathbb{N}}$ and induces the product topology on $\mathbb{N}_\infty^{\mathbb{N}}$. Consequently, the metric space $(\mathbb{N}_\infty^{\mathbb{N}}, \rho^{\mathbb{N}})$ is compact.
\end{lemma}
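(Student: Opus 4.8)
The plan is to verify the metric axioms by working termwise and then to establish that $\rho^\mathbb{N}$ induces the product topology by proving that the identity map on $\mathbb{N}_\infty^{\mathbb{N}}$ is a homeomorphism between the metric topology and the product topology; compactness will then follow from Tychonoff's theorem.

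First I would confirm that $\rho^\mathbb{N}$ is well-defined and finite. Since $\rho(m,n) = 1/m + 1/n \leq 2$ for all $m,n \in \mathbb{N}_\infty$ under the convention $1/\infty = 0$, each summand is bounded by $2/F_k^2$, so $\rho^\mathbb{N}(\sigma,\tau) \leq 2\sum_{k=1}^\infty 1/F_k^2$. Binet's formula \eqref{Binet formula} shows that $F_k$ grows geometrically, whence $\sum_{k=1}^\infty 1/F_k^2 < \infty$ and the defining series converges for every pair $(\sigma,\tau)$. The metric axioms then follow termwise from Proposition \ref{definition of rho}: non-negativity and symmetry are immediate; if $\rho^\mathbb{N}(\sigma,\tau) = 0$ then, because each coefficient $1/F_k^2$ is strictly positive, $\rho(\sigma_k,\tau_k) = 0$ for every $k$, forcing $\sigma_k = \tau_k$ for all $k$ and hence $\sigma = \tau$; and the triangle inequality is obtained by summing the termwise triangle inequalities for $\rho$.

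Next I would identify the two topologies via continuity of the identity map in both directions. For one inclusion, the elementary bound $\rho(\sigma_k,\tau_k) \leq F_k^2\,\rho^\mathbb{N}(\sigma,\tau)$ shows that each coordinate projection is Lipschitz from $(\mathbb{N}_\infty^{\mathbb{N}},\rho^\mathbb{N})$ to $(\mathbb{N}_\infty,\rho)$, hence continuous; by the universal property of the product topology, the identity from the metric topology to the product topology is continuous, so the product topology is coarser. For the reverse inclusion, given $\sigma$ and $\varepsilon > 0$, I would use the convergence of $\sum 1/F_k^2$ to choose $N$ with $2\sum_{k>N} 1/F_k^2 < \varepsilon/2$, and then pick $\delta > 0$ so small that $\delta\sum_{k=1}^N 1/F_k^2 < \varepsilon/2$. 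Splitting the defining sum at $N$ shows that the basic product-open set $\{\tau : \rho(\sigma_k,\tau_k) < \delta \text{ for } 1 \leq k \leq N\}$ is contained in the $\rho^\mathbb{N}$-ball of radius $\varepsilon$ about $\sigma$; thus every metric ball contains a product-open neighborhood of its center, giving the opposite inclusion. The two topologies therefore coincide.

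Finally, compactness is immediate: $\mathbb{N}_\infty$ is compact as the one-point compactification of $\mathbb{N}$ (Proposition \ref{definition of rho}), so by Tychonoff's theorem $\mathbb{N}_\infty^{\mathbb{N}}$ is compact in the product topology, and since $\rho^\mathbb{N}$ induces that topology the metric space $(\mathbb{N}_\infty^{\mathbb{N}},\rho^\mathbb{N})$ is compact. I expect no genuine obstacle here, as this is the standard weighted-sum product metric; the only point demanding care is the tail estimate in the reverse inclusion, where $N$ and $\delta$ must be coordinated so that both the head and tail contributions to $\rho^\mathbb{N}$ stay below $\varepsilon/2$. The sole role of Binet's formula is to guarantee the summability $\sum 1/F_k^2 < \infty$ on which the entire argument rests.
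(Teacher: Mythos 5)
Your proof is correct and follows essentially the same route as the paper: summability of $\sum_{k} 1/F_k^2$ via Binet's formula, the standard weighted-sum argument showing a countable product of metric spaces is metrizable (which the paper explicitly omits and you carry out in full, including the head/tail split), and compactness via Tychonoff's theorem. You have simply supplied the routine details the paper leaves to the reader.
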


\begin{proof}
For the first assertion, we note that $\sum_{k=1}^\infty 1/F_k^2 < \infty$, which follows directly from Binet's formula \eqref{Binet formula}. In fact, since $\phi \coloneqq (1+\sqrt{5})/2 >1$ and $(1-\sqrt{5})/2 = - 1/\phi$ has absolute value less than $1$, we have the convergence $(1/F_k^2)/(1/\phi^{2k}) \to 5$ as $k \to \infty$. Moreover, $\sum_{k=1}^\infty (1/\phi^{2k}) < \infty$ because it is a geometric series with ratio $1/\phi^2 <1$. By the limit comparison test, it follows that $\sum_{k=1}^\infty 1/F_k^2 < \infty$. We omit the remaining details, as the proof follows the same line as the standard proof that any countable product of metric spaces is metrizable. The second statement follows from Tychonoff's theorem, which tells us that the product space $\mathbb{N}_\infty^{\mathbb{N}}$ is compact. 
\end{proof}

Due to the preceding lemma, from now on, we may use $\mathbb{N}_\infty^{\mathbb{N}}$ to refer to both the product space and the metric space $(\mathbb{N}_\infty^{\mathbb{N}}, \rho^{\mathbb{N}})$.

Define $g \colon \mathbb{N}_\infty^{\mathbb{N}} \to \Sigma$ by
\[
g(\sigma) \coloneqq
\begin{cases}
\sigma^{(n)}, &\text{if } \sigma^{(n)} \in \Sigma_n \text{ and } \sigma^{(n+1)} \not \in \Sigma_{n+1} \text{ for some } n \in \mathbb{N}_0; \\
\sigma, &\text{if } \sigma \in \Sigma_\infty,
\end{cases}
\]
for each $\sigma \coloneqq (\sigma_k)_{k \in \mathbb{N}} \in \mathbb{N}_\infty^{\mathbb{N}}$. It is not hard to verify that $g$ is well-defined and that $g$ is surjective. Under the mapping $g$, each sequence forgets all terms following the first $\infty$, if any, and replaces the forgotten terms with $\infty$'s.

\begin{lemma} \label{varphi tilde as composition}
We have $\widetilde{\varphi} = \varphi \circ g$ on $\mathbb{N}_\infty^{\mathbb{N}}$.
\end{lemma}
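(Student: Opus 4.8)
The plan is to reduce the identity to a statement purely about $\widetilde{\varphi}$. By \eqref{definition of phi}, the map $\varphi$ is simply the restriction of $\widetilde{\varphi}$ to $\Sigma$, and $g$ takes values in $\Sigma$; hence $\widetilde{\varphi} = \varphi \circ g$ is equivalent to showing $\widetilde{\varphi}(\sigma) = \widetilde{\varphi}(g(\sigma))$ for every $\sigma \in \mathbb{N}_\infty^{\mathbb{N}}$. The first ingredient I would record is the elementary observation that, for each fixed $k \in \mathbb{N}_0$, the value $\widetilde{\varphi}_k(\sigma)$ depends only on the first $k$ coordinates $\sigma_1, \dotsc, \sigma_k$; this is immediate from the defining finite continued fraction for $\widetilde{\varphi}_k$.

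Next I would split into two cases according to whether $\sigma \in \Sigma_\infty$. If $\sigma \in \Sigma_\infty$, then $g(\sigma) = \sigma$ by the definition of $g$, and there is nothing to prove. If $\sigma \in \mathbb{N}_\infty^{\mathbb{N}} \setminus \Sigma_\infty$, I would let $N$ be the least index with $\sigma_N = \infty$. Unwinding the definition of $g$ shows that the integer there equals $N - 1$, so that $g(\sigma) = \sigma^{(N-1)} = (\sigma_1, \dotsc, \sigma_{N-1}, \infty, \infty, \dotsc)$. In particular $\sigma$ and $g(\sigma)$ agree in coordinates $1$ through $N$, both carrying $\infty$ at position $N$; by the coordinate-dependence observation this yields $\widetilde{\varphi}_N(\sigma) = \widetilde{\varphi}_N(g(\sigma))$.

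Finally I would invoke Lemma \ref{observation lemma} on both sequences. Since $\sigma_N = \infty$, the computation in that lemma shows the sequence $(\widetilde{\varphi}_k(\sigma))_k$ is eventually constant equal to $\widetilde{\varphi}_N(\sigma)$, whence $\widetilde{\varphi}(\sigma) = \widetilde{\varphi}_N(\sigma)$; and since $g(\sigma) \in \Sigma \setminus \Sigma_\infty$ also has its $N$-th coordinate equal to $\infty$, the same lemma gives $\widetilde{\varphi}(g(\sigma)) = \widetilde{\varphi}_N(g(\sigma))$. Chaining the three equalities produces
\[
\widetilde{\varphi}(\sigma) = \widetilde{\varphi}_N(\sigma) = \widetilde{\varphi}_N(g(\sigma)) = \widetilde{\varphi}(g(\sigma)) = \varphi(g(\sigma)),
\]
as desired.

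The only genuinely delicate point is the bookkeeping in identifying $g(\sigma)$ precisely, namely confirming that the index $n$ in the definition of $g$ equals $N - 1$ and that $\sigma$ and $g(\sigma)$ share their first $N$ coordinates, so that Lemma \ref{observation lemma} may be applied at the common index $N$ to collapse both limits to the \emph{same} finite truncation. Everything else follows directly from the definitions together with the convention $1/\infty = 0$ already used in that lemma.
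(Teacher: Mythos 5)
Your proposal is correct and follows essentially the same route as the paper's proof: reduce to showing $\widetilde{\varphi}(\sigma) = \widetilde{\varphi}(g(\sigma))$, handle $\Sigma_\infty$ trivially, and otherwise identify $g(\sigma)$ as the truncation $\sigma^{(N-1)}$ at the first occurrence of $\infty$ so that both limits collapse to the same finite truncation $\widetilde{\varphi}_N$. The only difference is cosmetic indexing (your $N$ is the paper's $n+1$) and that you spell out the appeal to Lemma \ref{observation lemma} which the paper compresses into ``it follows by definition.''
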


\begin{proof}
Note first that, for any $\sigma \in \mathbb{N}_\infty^{\mathbb{N}}$, we have $g(\sigma) \in \Sigma$, and so $\varphi (g(\sigma)) = \widetilde{\varphi} (g(\sigma))$ by definition of $\varphi$ in \eqref{definition of phi}. If $\sigma \in \Sigma_\infty$, then $g(\sigma) = \sigma$ by definition, and hence, $\widetilde{\varphi} (g(\sigma)) = \widetilde{\varphi} (\sigma)$. Now, assume $\sigma \in \mathbb{N}_\infty^{\mathbb{N}} \setminus \Sigma_\infty$, and put $\sigma \coloneqq (\sigma_k)_{k \in \mathbb{N}}$. Then, we can find the unique $n \in \mathbb{N}_0$ such that $\sigma^{(n)} \in \Sigma_n$ and $\sigma_{n+1} = \infty$, so that $g(\sigma) = \sigma^{(n)} \in \Sigma_n$. It follows by definition that $\widetilde{\varphi}(\sigma) = \widetilde{\varphi}_n(\sigma) = \widetilde{\varphi}_n(g(\sigma)) = \widetilde{\varphi}(g(\sigma))$, and this completes the proof.
\end{proof}

The following lemma shows that the mapping $\widetilde{\varphi} \colon \mathbb{N}_\infty^{\mathbb{N}} \to [0,1]$ is Lipschitz, and therefore continuous.

\begin{lemma} \label{phi is Lipschitz}
For any $\sigma, \tau \in \mathbb{N}_\infty^{\mathbb{N}}$, we have $| \widetilde{\varphi} (\sigma) - \widetilde{\varphi} (\tau)| \leq \rho^{\mathbb{N}} (\sigma, \tau)$.
\end{lemma}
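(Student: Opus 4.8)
The plan is to first prove the estimate for the finite truncations $\widetilde{\varphi}_n$ and then let $n \to \infty$; this sidesteps any appeal to the very continuity we are trying to establish. Fix $n \in \mathbb{N}$ and interpolate between $\sigma \coloneqq (\sigma_k)_{k \in \mathbb{N}}$ and $\tau \coloneqq (\tau_k)_{k \in \mathbb{N}}$ one coordinate at a time: for $0 \le j \le n$, let $\sigma^{[j]}$ denote the sequence agreeing with $\tau$ in the coordinates $1, \dotsc, j$ and with $\sigma$ in all later coordinates. Then $\sigma^{[0]} = \sigma$, and since $\widetilde{\varphi}_n$ depends only on the first $n$ entries, $\widetilde{\varphi}_n(\sigma^{[n]}) = \widetilde{\varphi}_n(\tau)$. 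As $\sigma^{[j-1]}$ and $\sigma^{[j]}$ differ in the single coordinate $j$, the triangle inequality yields
\[
|\widetilde{\varphi}_n(\sigma) - \widetilde{\varphi}_n(\tau)| \le \sum_{j=1}^n |\widetilde{\varphi}_n(\sigma^{[j-1]}) - \widetilde{\varphi}_n(\sigma^{[j]})|.
\]

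The heart of the argument is the per-coordinate bound $|\widetilde{\varphi}_n(\sigma^{[j-1]}) - \widetilde{\varphi}_n(\sigma^{[j]})| \le F_j^{-2}\,\rho(\sigma_j, \tau_j)$, which I would establish by cases. If $\sigma_j = \tau_j$, then $\sigma^{[j-1]} = \sigma^{[j]}$ and the left-hand side is $0$; likewise, if some $\tau_i = \infty$ with $i < j$, then by the convention $1/\infty = 0$ both continued fractions collapse at the same earlier position and again the difference is $0$. In the remaining case $\sigma_j \neq \tau_j$ and $\tau_1, \dotsc, \tau_{j-1} \in \mathbb{N}$, so $\rho(\sigma_j, \tau_j) = 1/\sigma_j + 1/\tau_j$ and the convergents $p_k/q_k = [\tau_1, \dotsc, \tau_k]$ for $-1 \le k \le j-1$ are genuine, with $q_{j-1} \ge F_j$ by Proposition \ref{metric properties}\,(\ref{metric properties 2}). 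Writing $t \coloneqq [\sigma_{j+1}, \dotsc, \sigma_n] \in [0,1]$ for the common tail, the standard complete-quotient representation expresses both values as images of the M\"obius map $u \mapsto (u p_{j-1} + p_{j-2})/(u q_{j-1} + q_{j-2})$ at $u = \alpha \coloneqq \sigma_j + t$ and $u = \beta \coloneqq \tau_j + t$, respectively. Using $p_{j-1} q_{j-2} - p_{j-2} q_{j-1} = \pm 1$, I would derive the identity
\[
\widetilde{\varphi}_n(\sigma^{[j-1]}) - \widetilde{\varphi}_n(\sigma^{[j]}) = \frac{\pm 1}{q_{j-1}}\left( \frac{1}{\beta q_{j-1} + q_{j-2}} - \frac{1}{\alpha q_{j-1} + q_{j-2}} \right),
\]
which stays valid when $\sigma_j$ or $\tau_j$ equals $\infty$, the corresponding reciprocal then being $0$. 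Bounding the two denominators below by $\sigma_j q_{j-1}$ and $\tau_j q_{j-1}$ and using $q_{j-1} \ge F_j$ gives $|\widetilde{\varphi}_n(\sigma^{[j-1]}) - \widetilde{\varphi}_n(\sigma^{[j]})| \le q_{j-1}^{-2}(1/\sigma_j + 1/\tau_j) \le F_j^{-2}\,\rho(\sigma_j, \tau_j)$.

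Summing over $j$ and extending the sum to all of $\mathbb{N}$ then gives $|\widetilde{\varphi}_n(\sigma) - \widetilde{\varphi}_n(\tau)| \le \sum_{k=1}^\infty F_k^{-2}\,\rho(\sigma_k, \tau_k) = \rho^{\mathbb{N}}(\sigma, \tau)$ for every $n$. Finally, letting $n \to \infty$ and invoking $\widetilde{\varphi}_n(\sigma) \to \widetilde{\varphi}(\sigma)$ and $\widetilde{\varphi}_n(\tau) \to \widetilde{\varphi}(\tau)$ from the definition of $\widetilde{\varphi}$ as a pointwise limit, the inequality passes to the limit and yields the claim.

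I expect the single genuine obstacle to be the per-coordinate estimate: massaging the M\"obius-difference into a shape whose bound is exactly $F_j^{-2}\,\rho(\sigma_j, \tau_j)$, and verifying that it degrades gracefully under the convention $1/\infty = 0$ --- both when the coordinate being changed is infinite and when an earlier coordinate is, the latter forcing the difference to vanish outright. The telescoping, the denominator bound $q_{j-1} \ge F_j$, and the passage to the limit are then routine.
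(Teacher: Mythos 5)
Your proof is correct, but it takes a genuinely different route from the paper's. The paper never telescopes: it locates the \emph{first} index $n+1$ at which $\sigma$ and $\tau$ differ (handling $\sigma_1 \neq \tau_1$ and a shared-$\infty$ prefix as quick special cases), observes that $\widetilde{\varphi}(\sigma)$ and $\widetilde{\varphi}(\tau)$ both agree with their common truncation $\widetilde{\varphi}(\sigma^{(n)}) = \varphi_n(g(\sigma))$ up to the convergent-approximation error of Proposition \ref{metric properties}(\ref{metric properties 3}), and then uses the recursion (\ref{metric properties 1}) and the Fibonacci bound (\ref{metric properties 2}) to absorb the entire difference into the \emph{single} term $F_{n+1}^{-2}\,\rho(\sigma_{n+1},\tau_{n+1})$ of the series defining $\rho^{\mathbb{N}}(\sigma,\tau)$; the case split there ($\tau_{n+1}=\infty$ versus $\tau_{n+1}\neq\infty$) plays the role your uniform M\"obius identity plays. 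Your coordinate-by-coordinate interpolation instead distributes the difference over all indices, and your per-coordinate M\"obius estimate essentially re-derives the content of Proposition \ref{metric properties}(\ref{metric properties 3}) from the determinant identity $p_{j-1}q_{j-2}-p_{j-2}q_{j-1}=\pm 1$ --- an identity that is standard (and follows from (\ref{metric properties 1})) but is not among the facts the paper quotes, so you are carrying slightly more machinery. What your route buys in exchange: it proves the stronger uniform statement $|\widetilde{\varphi}_n(\sigma)-\widetilde{\varphi}_n(\tau)|\leq\rho^{\mathbb{N}}(\sigma,\tau)$ for every finite truncation and passes to the limit, thereby avoiding any direct manipulation of infinite tails (the paper instead leans on $g$, Lemma \ref{varphi tilde as composition}, and the well-definedness of $\widetilde{\varphi}$ for that); and your treatment of the $\infty$ conventions is localized in one identity rather than spread over cases. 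Both arguments are sound; the paper's is shorter given its preassembled toolkit, yours is more self-contained and makes the Lipschitz mechanism visible coordinate by coordinate.
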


\begin{proof}
Let $\sigma \coloneqq (\sigma_k)_{k \in \mathbb{N}}, \tau \coloneqq (\tau_k)_{k \in \mathbb{N}} \in \mathbb{N}_\infty^{\mathbb{N}}$. If $\sigma = \tau$, the inequality holds trivially; hence, we suppose that $\sigma$ and $\tau$ are distinct. If $\sigma_1 \neq \tau_1$, then
\[
|\widetilde{\varphi} (\sigma) - \widetilde{\varphi} (\tau)| \leq |\widetilde{\varphi} (\sigma)| + |\widetilde{\varphi} (\tau)| \leq \frac{1}{\sigma_1} + \frac{1}{\tau_1} = \frac{1}{F_1^2} \rho (\sigma_1, \tau_1) \leq \rho^{\mathbb{N}} (\sigma, \tau).
\]
Assume that $\sigma$ and $\tau$ share the initial block of length $n \in \mathbb{N}$, i.e., $\sigma^{(n)} = \tau^{(n)}$ and $\sigma_{n+1} \neq \tau_{n+1}$. If $\sigma^{(n)} = \tau^{(n)} \not \in \Sigma_n$, then $\sigma_j = \tau_j = \infty$ for some $j \in \{ 1, \dotsc, n \}$, and it follows that $\widetilde{\varphi} (\sigma) = \widetilde{\varphi} (\tau)$; thus, the inequality holds. Now, assume $\sigma^{(n)} = \tau^{(n)} \in \Sigma_n$. Since $\sigma_{n+1} \neq \tau_{n+1}$, at least one of $\sigma_{n+1}$ and $\tau_{n+1}$ is finite. Without loss of generality, we may further assume $\sigma_{n+1} \neq \infty$, so that 
\begin{align} \label{g sigma in Sigma j}
g(\sigma) \in \Sigma_l \quad \text{for some } l \in \mathbb{N}_\infty \setminus \{ 1, \dotsc, n, n+1 \}. 
\end{align}
Write $q_k^* = q_k^* (\sigma^{(n)}) = q_k^* (\tau^{(n)})$ for each $k \in \{ 0, 1, \dotsc, n \}$. We consider two cases separately according as $\tau_{n+1} = \infty$ or $\tau_{n+1} \neq \infty$.

{\sc Case I}.
Assume $\tau_{n+1} = \infty$. Then, by definitions, we have
\begin{gather} \label{equalities from definition}
\begin{gathered}
\widetilde{\varphi}(\sigma^{(n)}) = [\sigma_1, \dotsc, \sigma_n, \infty, \infty, \dotsc] = \varphi_n (g(\sigma)), \\
\widetilde{\varphi}(\tau)
= [\tau_1, \dotsc, \tau_n, \infty, \tau_{n+2}, \dotsc] 
= [\tau_1, \dotsc, \tau_n, \infty, \infty, \dotsc] = \widetilde{\varphi}(\tau^{(n)}).
\end{gathered}
\end{gather}
Hence, we find that
\begin{align*}
&| \widetilde{\varphi} (\sigma) - \widetilde{\varphi} (\tau)| \\
&\hspace{0.4em} = | ( \widetilde{\varphi} (\sigma) - \widetilde{\varphi} (\sigma^{(n)}) ) -  (\widetilde{\varphi} (\tau) - \widetilde{\varphi} (\tau^{(n)}) ) | && \text{since $\sigma^{(n)} = \tau^{(n)}$} \\
&\hspace{0.4em} = |\varphi (g (\sigma)) - \varphi_n (g(\sigma)) | && \text{by Lemma \ref{varphi tilde as composition} and \eqref{equalities from definition}} \\
&\hspace{0.4em} \leq \frac{1}{q_n^*(g(\sigma))  q_{n+1}^* (g(\sigma))} && \text{by \eqref{g sigma in Sigma j} and Proposition \ref{metric properties}(\ref{metric properties 3})} \\
&\hspace{0.4em} = \frac{1}{q_n^*(g(\sigma))} \cdot \frac{1}{\sigma_{n+1} q_n^*(g(\sigma)) + q_{n-1}^*(g(\sigma))} && \text{by Proposition \ref{metric properties}(\ref{metric properties 1})} \\
&\hspace{0.4em} = \frac{1}{q_n^*} \cdot \frac{1}{\sigma_{n+1} q_n^* + q_{n-1}^*} && \text{since $\sigma^{(n)} = (g(\sigma))^{(n)} \in \Sigma_{n}$} \\
&\hspace{0.4em} < \frac{1}{(q_n^*)^2} \frac{1}{\sigma_{n+1}} 
=  \frac{1}{(q_n^*)^2} \left( \frac{1}{\sigma_{n+1}} + \frac{1}{\tau_{n+1}} \right) &&\text{since $1/\tau_{n+1}=0$ by assumption} \\
&\hspace{0.4em} \leq \frac{1}{F_{n+1}^2} \left( \frac{1}{\sigma_{n+1}} + \frac{1}{\tau_{n+1}} \right)  &&\text{by Proposition \ref{metric properties}(\ref{metric properties 2})} \\
&\hspace{0.4em} = \frac{1}{F_{n+1}^2} \rho (\sigma_{n+1}, \tau_{n+1})
\leq \rho^{\mathbb{N}} (\sigma, \tau),
\end{align*}
as desired.

{\sc Case II}.
Assume $\tau_{n+1} \neq \infty$. Then, $g(\tau) \in \Sigma_{l'}$ for some $l' \in \mathbb{N}_\infty \setminus \{ 1, \dotsc, n, n+1 \}$. An argument similar to the one in the preceding case results in
\begin{align*}
| \widetilde{\varphi} (\sigma) - \widetilde{\varphi} (\tau)| 
&= | ( \widetilde{\varphi} (\sigma) - \widetilde{\varphi} (\sigma^{(n)}) ) -  (\widetilde{\varphi} (\tau) - \widetilde{\varphi} (\tau^{(n)}) ) | \\
&\leq | \varphi (g (\sigma)) - \varphi_n (g (\sigma))| + | \varphi (g (\tau)) - \varphi_n (g (\tau))| \\
&\leq \frac{1}{q_n^*(g(\sigma))  q_{n+1}^* (g(\sigma))} + \frac{1}{q_n^*(g(\tau))  q_{n+1}^* (g(\tau))} \\
&= \frac{1}{q_n^*} \left( \frac{1}{\sigma_{n+1} q_n^* + q_{n-1}^*} + \frac{1}{\tau_{n+1} q_n^* + q_{n-1}^*} \right) \\
&< \frac{1}{(q_n^*)^2} \left( \frac{1}{\sigma_{n+1}} + \frac{1}{\tau_{n+1}} \right) \\
&\leq \frac{1}{F_{n+1}^2} \left( \frac{1}{\sigma_{n+1}} + \frac{1}{\tau_{n+1}} \right) 
= \frac{1}{F_{n+1}^2} \rho (\sigma_{n+1}, \tau_{n+1})
\leq \rho^{\mathbb{N}} (\sigma, \tau).
\end{align*}
This completes the proof of the lemma.
\end{proof}

Let $\sim_g$ be the binary relation on $\mathbb{N}_\infty^{\mathbb{N}}$ given by $\sigma \sim_g \tau$ if and only if $g(\sigma) = g(\tau)$ for $\sigma, \tau \in \mathbb{N}_\infty^{\mathbb{N}}$. Evidently, $\sim_g$ is an equivalence relation on $\mathbb{N}_\infty^{\mathbb{N}}$. Note that the map $\widetilde{g} \colon \mathbb{N}_\infty^{\mathbb{N}} / \sim_g \to \Sigma$ given by $\widetilde{g} ([\sigma]) = g(\sigma)$, for each $[\sigma] \in \mathbb{N}_\infty^{\mathbb{N}} / \sim_g$, is a bijection. We equip $\mathbb{N}_\infty^{\mathbb{N}} / \sim_g$ with the quotient topology, and define 
\[
\mathcal{T}_\Sigma \coloneqq \{ \widetilde{g}(U) : \text{$U$ is open in $\mathbb{N}_\infty^{\mathbb{N}} / \sim_g$} \}.
\]

\begin{lemma} \label{g tilde is homeomorphism}
The set $\mathcal{T}_\Sigma$ defines a topology on $\Sigma$, and $\widetilde{g} \colon \mathbb{N}_\infty^{\mathbb{N}} / \sim_g \to (\Sigma, \mathcal{T}_\Sigma)$ is a homeomorphism.
\end{lemma}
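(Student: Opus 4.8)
The plan is to recognize $\mathcal{T}_\Sigma$ as nothing more than the \emph{pushforward} of the quotient topology along the bijection $\widetilde{g}$, and then to invoke the general principle that transporting a topology along a bijection automatically produces a topology for which that bijection is a homeomorphism. Write $Q \coloneqq \mathbb{N}_\infty^{\mathbb{N}} / \sim_g$ for the quotient space, equipped with its quotient topology $\mathcal{T}_Q$. As already observed, $\widetilde{g} \colon Q \to \Sigma$ is a bijection, so the induced set map $U \mapsto \widetilde{g}(U)$ on power sets is itself a bijection that commutes with arbitrary unions and, crucially by injectivity, with arbitrary intersections, and it satisfies $\widetilde{g}(\emptyset) = \emptyset$ and $\widetilde{g}(Q) = \Sigma$.

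First I would verify the three axioms for $\mathcal{T}_\Sigma$. Since $\emptyset, Q \in \mathcal{T}_Q$, their images $\emptyset, \Sigma$ lie in $\mathcal{T}_\Sigma$. Given a family $\{\widetilde{g}(U_i)\}_{i \in J}$ with each $U_i \in \mathcal{T}_Q$, the identity $\bigcup_{i} \widetilde{g}(U_i) = \widetilde{g}\bigl(\bigcup_i U_i\bigr)$ together with $\bigcup_i U_i \in \mathcal{T}_Q$ shows that the union lies in $\mathcal{T}_\Sigma$; and for two members, injectivity of $\widetilde{g}$ gives $\widetilde{g}(U_1) \cap \widetilde{g}(U_2) = \widetilde{g}(U_1 \cap U_2) \in \mathcal{T}_\Sigma$, which extends to any finite intersection by induction. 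Hence $\mathcal{T}_\Sigma$ is a topology on $\Sigma$.

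It then remains to check that $\widetilde{g}$ is a homeomorphism. It is a bijection by hypothesis. Openness is immediate: by the very definition of $\mathcal{T}_\Sigma$, the image $\widetilde{g}(U)$ of every $U \in \mathcal{T}_Q$ is $\mathcal{T}_\Sigma$-open. For continuity, any $V \in \mathcal{T}_\Sigma$ has the form $V = \widetilde{g}(U)$ with $U \in \mathcal{T}_Q$, and bijectivity yields $\widetilde{g}^{-1}(V) = U \in \mathcal{T}_Q$, so preimages of open sets are open.

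I expect no genuine obstacle here: the statement is a purely formal consequence of $\widetilde{g}$ being a bijection, and the only point demanding care is to flag precisely where \emph{injectivity} is used, namely in the intersection axiom, via $\widetilde{g}(U_1 \cap U_2) = \widetilde{g}(U_1) \cap \widetilde{g}(U_2)$, and in the continuity step, via $\widetilde{g}^{-1}(\widetilde{g}(U)) = U$, both of which can fail for non-injective maps. The substantive topological content of the construction lies not in this lemma but in the subsequent identification of $\mathcal{T}_\Sigma$ with a concrete, recognizable topology on $\Sigma$.
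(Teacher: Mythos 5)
Your proof is correct and is exactly the formal verification the paper has in mind: the paper's own proof is simply ``The lemma is clear from the definitions,'' and what it considers clear is precisely your transport-of-structure argument, with injectivity of $\widetilde{g}$ handling intersections and preimages. No discrepancy; you have merely written out the details the paper omits.
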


\begin{proof}
The lemma is clear from the definitions.
\end{proof}

\begin{lemma} \label{commutative diagram}
Let $\pi_g \colon \mathbb{N}_\infty^{\mathbb{N}} \to \mathbb{N}_\infty^{\mathbb{N}} / \sim_g$ denote the canonical projection. Then, the following diagram commutes:
\[
\begin{tikzcd}
\mathbb{N}_\infty^{\mathbb{N}} \arrow{r}{\widetilde{\varphi}} \arrow[swap]{d}{\pi_g} \arrow{dr}{g} & {[0,1]} \\
\mathbb{N}_\infty^{\mathbb{N}} / \sim_g \arrow[swap]{r}{\widetilde{g}} & (\Sigma, \mathcal{T}_\Sigma) \arrow[swap]{u}{\varphi}
\end{tikzcd}
\]
In particular, $g \colon \mathbb{N}_\infty^{\mathbb{N}} \to (\Sigma, \mathcal{T}_\Sigma)$ is a continuous surjection. 
\end{lemma}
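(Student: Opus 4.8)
The plan is to verify commutativity by checking the two constituent triangles separately, and then to obtain the continuity of $g$ by expressing it as a composition of maps already known to be continuous.

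First I would establish the lower-left triangle, namely $\widetilde{g} \circ \pi_g = g$. This is immediate from the definitions: for any $\sigma \in \mathbb{N}_\infty^{\mathbb{N}}$ we have $\pi_g(\sigma) = [\sigma]$, whence $\widetilde{g}(\pi_g(\sigma)) = \widetilde{g}([\sigma]) = g(\sigma)$ by the defining formula for $\widetilde{g}$. Next I would invoke Lemma \ref{varphi tilde as composition}, which gives precisely $\varphi \circ g = \widetilde{\varphi}$; this is the remaining (outer) triangle and carries essentially all the content in the diagram. Composing the two relations then yields $\varphi \circ \widetilde{g} \circ \pi_g = \varphi \circ g = \widetilde{\varphi}$, so the entire diagram commutes.

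For the ``in particular'' clause I would argue as follows. Continuity of $g$ follows from the factorization $g = \widetilde{g} \circ \pi_g$ just established: the canonical projection $\pi_g$ is continuous by the definition of the quotient topology on $\mathbb{N}_\infty^{\mathbb{N}} / \sim_g$, and $\widetilde{g}$ is continuous because it is a homeomorphism by Lemma \ref{g tilde is homeomorphism}. A composition of continuous maps is continuous, so $g \colon \mathbb{N}_\infty^{\mathbb{N}} \to (\Sigma, \mathcal{T}_\Sigma)$ is continuous. Surjectivity was already recorded when $g$ was introduced; alternatively it follows at once from the same factorization, since $\pi_g$ is surjective and $\widetilde{g}$ is a bijection, so their composite is surjective.

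The main obstacle --- insofar as there is one --- lies not in this lemma but in its inputs: the substantive work is already packaged into Lemma \ref{varphi tilde as composition} (the factorization of $\widetilde{\varphi}$) and into the construction of $\mathcal{T}_\Sigma$ guaranteeing, via Lemma \ref{g tilde is homeomorphism}, that $\widetilde{g}$ is a homeomorphism. Given those, the present statement is purely a matter of unwinding definitions and composing known-continuous maps, so I expect no serious difficulty beyond careful bookkeeping of the diagram's two triangles.
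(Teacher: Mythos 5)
Your proof is correct and follows essentially the same route as the paper's (very terse) proof: the paper likewise obtains commutativity from Lemma \ref{varphi tilde as composition} together with the defining relation $\widetilde{g} \circ \pi_g = g$, with continuity of $g$ coming from that factorization, the quotient topology, and Lemma \ref{g tilde is homeomorphism}. Your write-up simply makes explicit the bookkeeping that the paper leaves to the reader.
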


\begin{proof}
The result follows from Lemma \ref{varphi tilde as composition} and the paragraph preceding Lemma \ref{g tilde is homeomorphism}.
\end{proof}

\begin{lemma} \label{closed equivalence relation}
The graph of the equivalence relation $\sim_g$, i.e., the set $R_g \coloneqq \{ (\sigma, \tau) \in \mathbb{N}_\infty^{\mathbb{N}} \times \mathbb{N}_\infty^{\mathbb{N}} : \sigma \sim_g \tau \}$, is closed in the product space $\mathbb{N}_\infty^{\mathbb{N}} \times \mathbb{N}_\infty^{\mathbb{N}}$.
\end{lemma}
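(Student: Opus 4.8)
The plan is to show that the complement of $R_g$ is open directly in the product space, working with the explicit description of $g$ rather than passing to the diagonal of $(\Sigma, \mathcal{T}_\Sigma)$. The latter route would require $(\Sigma, \mathcal{T}_\Sigma)$ to be Hausdorff, which is not yet available at this stage of the paper — in fact the Hausdorffness of the quotient is most naturally deduced \emph{from} the present lemma, since $\mathbb{N}_\infty^{\mathbb{N}}$ is compact Hausdorff by Lemma \ref{Sigma is metric} and a quotient of a compact Hausdorff space by a closed equivalence relation is Hausdorff. So I would instead fix a pair $(\sigma, \tau)$ with $g(\sigma) \neq g(\tau)$ and manufacture a basic product-open neighborhood $U \times V$ of $(\sigma, \tau)$ disjoint from $R_g$.

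The first step is to localize the inequality to a single coordinate. As $g(\sigma)$ and $g(\tau)$ are distinct points of $\mathbb{N}_\infty^{\mathbb{N}}$, let $k$ be the least index with $(g(\sigma))_k \neq (g(\tau))_k$. Here I would exploit the defining structure of $\Sigma$: each of its elements has all coordinates in $\mathbb{N}$ up to its first $\infty$ and only $\infty$'s thereafter. Hence for every $j < k$ the common value $(g(\sigma))_j = (g(\tau))_j$ must be finite, for otherwise all later coordinates of both sequences, the $k$-th included, would equal $\infty$, contradicting the choice of $k$. Unwinding the definition of $g$, this forces $\sigma_j = \tau_j \in \mathbb{N}$ for all $j < k$, together with $(g(\sigma))_k = \sigma_k$ and $(g(\tau))_k = \tau_k$; in particular $\sigma_k \neq \tau_k$ as elements of $\mathbb{N}_\infty$.

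The observation that drives the separation is that, once the first $k-1$ entries of a sequence are finite, $g$ leaves the $k$-th entry untouched: for any $\upsilon$ with $\upsilon_1, \dotsc, \upsilon_{k-1} \in \mathbb{N}$ one reads off from the definition of $g$ that $(g(\upsilon))_k = \upsilon_k$, both when $\upsilon_k$ is finite (the first $\infty$ occurs after position $k$, so $g$ retains coordinate $k$) and when $\upsilon_k = \infty$ (the first $\infty$ occurs exactly at position $k$, and $g$ records an $\infty$ there). Granting this, I would invoke Proposition \ref{definition of rho}, which makes $\mathbb{N}_\infty$ a metric, hence Hausdorff, space, to pick disjoint open sets $A \ni \sigma_k$ and $B \ni \tau_k$ in $\mathbb{N}_\infty$. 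Setting
\[
U \coloneqq \{ \upsilon \in \mathbb{N}_\infty^{\mathbb{N}} : \upsilon_j = \sigma_j \text{ for } 1 \le j < k, \ \upsilon_k \in A \}, \qquad V \coloneqq \{ w \in \mathbb{N}_\infty^{\mathbb{N}} : w_j = \tau_j \text{ for } 1 \le j < k, \ w_k \in B \}
\]
yields basic product-open neighborhoods of $\sigma$ and $\tau$, since every $\sigma_j$ with $j<k$ is a finite, hence isolated, point of $\mathbb{N}_\infty$ and so $\{\sigma_j\}$ is open. For $\upsilon \in U$ and $w \in V$ the observation gives $(g(\upsilon))_k = \upsilon_k \in A$ and $(g(w))_k = w_k \in B$; as $A \cap B = \varnothing$, we obtain $g(\upsilon) \neq g(w)$, i.e. $(\upsilon, w) \notin R_g$. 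Thus $(U \times V) \cap R_g = \varnothing$, and $R_g^{c}$ is open.

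I expect the main obstacle to lie in the careful bookkeeping around the coordinate value $\infty$: concretely, in verifying that pinning the earlier coordinates to finite values makes $(g(\upsilon))_k = \upsilon_k$ uniformly across the two regimes $\upsilon_k \in \mathbb{N}$ and $\upsilon_k = \infty$, and in confirming — via the finite-then-$\infty$ shape of elements of $\Sigma$ — that the first-difference coordinate $k$ is preceded only by finite common values. Once this structural fact is secured, the separation reduces to Hausdorffness of the single factor $\mathbb{N}_\infty$, which is already in hand.
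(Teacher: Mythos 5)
Your proof is correct, and it takes a genuinely different route from the paper's. The paper exploits metrizability of $\mathbb{N}_\infty^{\mathbb{N}} \times \mathbb{N}_\infty^{\mathbb{N}}$ (from Lemma \ref{Sigma is metric}) to reduce closedness to a sequential statement, then argues by contradiction: a sequence in $R_g$ converging to $(\sigma,\tau)$ with $g(\sigma) \neq g(\tau)$ must eventually consist of non-equivalent pairs, because the terms eventually copy the first $M$ coordinates of $\sigma$ and $\tau$, where $M$ is the first index at which $g(\sigma)$ and $g(\tau)$ differ. You instead show the complement of $R_g$ is open by exhibiting an explicit basic open rectangle $U \times V$ around any pair $(\sigma,\tau) \notin R_g$, pinning the (necessarily finite, hence isolated) common coordinates before the first-difference index $k$ and separating the $k$-th coordinates by disjoint open sets in the Hausdorff factor $\mathbb{N}_\infty$. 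Both arguments rest on the same two structural facts — that coordinates of elements of $\Sigma$ preceding the first difference are finite, and that $g$ leaves the $k$-th coordinate untouched once the earlier ones are finite (your ``observation,'' which you verify correctly in both regimes $\upsilon_k \in \mathbb{N}$ and $\upsilon_k = \infty$) — so the combinatorial core is shared. What your version buys: it needs only Hausdorffness of the single factor $\mathbb{N}_\infty$ (Proposition \ref{definition of rho}) rather than metrizability of the product, it is direct rather than by contradiction, and it produces an actual separating neighborhood, which is arguably more informative. What the paper's version buys: having already paid for the metric $\rho^{\mathbb{N}}$, the sequential argument requires no bookkeeping about which product-open sets are basic, and the case split on $\tau_M = \infty$ versus $\tau_M \neq \infty$ replaces your appeal to Hausdorff separation in $\mathbb{N}_\infty$. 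Your opening remark about circularity is also well taken: deducing closedness of $R_g$ from Hausdorffness of the quotient would invert the paper's logical order, since Lemma \ref{closed equivalence relation} is precisely what feeds Proposition \ref{first Bourbaki proposition} to establish that Hausdorffness.
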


\begin{proof}
Since the product space $\mathbb{N}_\infty^{\mathbb{N}} \times \mathbb{N}_\infty^{\mathbb{N}}$ is metrizable as a finite product of the metric space $\mathbb{N}_\infty^{\mathbb{N}}$ (Lemma \ref{Sigma is metric}), it suffices to show that any convergent sequence in $R_g$ has its limit in $R_g$. 

Let $((\bm{\sigma}_k, \bm{\tau}_k))_{k \in \mathbb{N}}$ be a convergent sequence in $R_g$ with the limit $(\sigma, \tau) \in \mathbb{N}_\infty^{\mathbb{N}} \times \mathbb{N}_\infty^{\mathbb{N}}$. Suppose $(\sigma, \tau) \in \mathbb{N}_\infty^{\mathbb{N}} \times \mathbb{N}_\infty^{\mathbb{N}} \setminus R_g$ for the sake of contradiction. Then, $\sigma \not \sim_g \tau$, or, equivalently, $g(\sigma) \neq g(\tau)$. Since $g(\sigma)$ and $g(\tau)$ are elements of $\Sigma$, we may write $g(\sigma) = ((g(\sigma))_j)_{j \in \mathbb{N}}$ and $g(\tau) = ((g(\tau))_j)_{j \in \mathbb{N}}$. Put $M \coloneqq \min \{ j \in \mathbb{N} : (g(\sigma))_j \neq (g(\tau))_j \} < \infty$. Without loss of generality, we may assume that $(g(\sigma))_M \neq \infty$. Then, $(g(\sigma))_j \in \mathbb{N}$ for all $j \in \{ 1, \dotsc, M-1 \}$ and $g(\sigma) \in \Sigma_i$ for some $i \in \mathbb{N}_\infty \setminus \{ 1, \dotsc, M \}$. There are two cases to consider according as $\tau_M \neq \infty$ or $\tau_M = \infty$.

{\sc Case I}. Assume $\tau_M \neq \infty$. Since the first $M$ terms of $g(\sigma)$ are all finite, we have $\sigma^{(M)} = (g(\sigma))^{(M)} \in \Sigma_M$ by definition of $g$; similarly, $\tau^{(M)} = (g(\tau))^{(M)} \in \Sigma_M$. Note that $\sigma_M \neq \tau_M$ by definition of $M$. Since $\bm{\sigma}_k \to \sigma$ and $\bm{\tau}_k \to \tau$ as $k \to \infty$ in the product space $\mathbb{N}_\infty^{\mathbb{N}}$, we know that $(\bm{\sigma}_k)^{(M)} = \sigma^{(M)} \in \Sigma_M$ and $(\bm{\tau}_k)^{(M)} = \tau^{(M)} \in \Sigma_M$ for all large enough $k$. Then, for such $k$, it must be that $(g(\bm{\sigma}_k))_M = \sigma_M$ and $(g(\bm{\tau}_k))_M = \tau_M$. But $\sigma_M \neq \tau_M$, and it follows that $g(\bm{\sigma}_k) \neq g(\bm{\tau}_k)$, i.e., $\bm{\sigma}_k \not \sim_g \bm{\tau}_k$, for all large enough $k$, a contradiction. 

{\sc Case II}. Assume $\tau_M = \infty$. Then, $(g(\bm{\sigma}_k))_M = \sigma_M$ for all large enough $k$ as in the previous case, while $(g(\bm{\tau}_k))_M \to \infty$ as $k \to \infty$. In particular, $(g(\bm{\tau}_k))_M > \sigma_M$ for all sufficiently large $k$, and therefore, $g(\bm{\sigma}_k) \neq g(\bm{\tau_k})$ for all such $k$. Thus, $\bm{\sigma}_k \not \sim_g \bm{\tau}_k$ for all but finitely many $k$'s, contradicting that $(\bm{\sigma}_k, \bm{\tau}_k) \in R_g$ for all $k \in \mathbb{N}$.

This proves that the limit $(\sigma, \tau)$ must be in $R_g$ in either case. Hence the lemma.
\end{proof}

We state some standard facts in general topology in the following two propositions.

\begin{proposition}[See {\cite[p.~105, Proposition 8]{Bou67a}}] \label{first Bourbaki proposition}
Let $X$ be a compact space and $\sim$ an equivalence relation on $X$. If $X/\sim$ denotes the quotient space, then the following statements are equivalent:
\begin{enumerate} [label = \upshape(\roman*), ref = \roman*, leftmargin=*, widest=iii]
\item
The graph of $\sim$, i.e., the set $\{ (x, y) \in X \times X : x \sim y \}$, is closed in the product space $X \times X$.
\item
The canonical projection $\pi \colon X \to X/\sim$ is a closed mapping.
\item
$X/\sim$ is Hausdorff.
\end{enumerate}
\end{proposition}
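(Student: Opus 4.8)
The plan is to prove the three-way equivalence by establishing the cyclic chain $\mathrm{(iii)} \Rightarrow \mathrm{(i)} \Rightarrow \mathrm{(ii)} \Rightarrow \mathrm{(iii)}$. Throughout I write $R = \{(x,y) \in X \times X : x \sim y\}$ for the graph and $\pi \colon X \to X/\sim$ for the canonical projection, which is continuous and surjective by definition of the quotient topology. I recall two facts that will be used repeatedly: first, a subset $B \subseteq X/\sim$ is closed exactly when $\pi^{-1}(B)$ is closed in $X$, and for a saturated set $S = \pi^{-1}(\pi(S))$ one has $\pi(S)$ open (resp. closed) precisely when $S$ is; second, since $X$ is compact (hence, in the Bourbaki convention, Hausdorff), $X$ is normal and $T_1$. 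These are the only structural inputs I would invoke.

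For $\mathrm{(iii)} \Rightarrow \mathrm{(i)}$, I would simply note that $R = (\pi \times \pi)^{-1}(\Delta)$, where $\Delta$ is the diagonal of $(X/\sim) \times (X/\sim)$ and $\pi \times \pi$ is continuous; if the quotient is Hausdorff then $\Delta$ is closed, so its preimage $R$ is closed. This step needs neither compactness nor normality. For $\mathrm{(i)} \Rightarrow \mathrm{(ii)}$, the key external input is that projection along a compact factor is a closed map: as the second factor is the compact space $X$, the first projection $p_1 \colon X \times X \to X$ is closed. Given a closed set $A \subseteq X$, its saturation satisfies $\pi^{-1}(\pi(A)) = p_1\bigl(R \cap (X \times A)\bigr)$; since $R$ and $X \times A$ are closed, so is their intersection, and applying the closed map $p_1$ shows $\pi^{-1}(\pi(A))$ is closed, whence $\pi(A)$ is closed and $\pi$ is a closed map. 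This is where compactness of $X$ is genuinely used.

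The implication $\mathrm{(ii)} \Rightarrow \mathrm{(iii)}$ is the one I expect to be the main obstacle, because it requires manufacturing disjoint saturated open sets that descend to the quotient. Since $X$ is $T_1$ and $\pi$ is closed, the image $\pi(\{x\})$ is a closed singleton, so each class $\pi^{-1}(\pi(\{x\}))$ is a closed saturated subset of $X$; thus two distinct points of $X/\sim$ pull back to disjoint saturated closed sets $C_x, C_y$, which normality of $X$ separates by disjoint open sets $U \supseteq C_x$ and $V \supseteq C_y$. The crux is a saturation lemma: if $\pi$ is closed and a saturated closed set $C$ lies in an open set $U$, then $\pi^{-1}\bigl((X/\sim) \setminus \pi(X \setminus U)\bigr)$ is a saturated open set sandwiched between $C$ and $U$, where closedness of $\pi$ is exactly what guarantees $\pi(X \setminus U)$ is closed. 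Applying this lemma to $C_x \subseteq U$ and $C_y \subseteq V$ produces disjoint saturated open sets whose images are disjoint open neighbourhoods separating the two points in $X/\sim$. I would present this lemma as the technical heart, since it is precisely the interplay of the closed-map hypothesis with normality that makes the separation go through; the remaining verifications that the constructed set is saturated, open, and correctly sandwiched are routine set-theoretic checks.
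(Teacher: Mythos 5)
The paper never proves this proposition at all: it is quoted as a standard fact with a citation to Bourbaki, so there is no internal argument to measure your proposal against, and the only question is whether your proof stands on its own. It does. The implication (iii) $\Rightarrow$ (i) via $R = (\pi \times \pi)^{-1}(\Delta)$ is correct and, as you note, uses no compactness. The implication (i) $\Rightarrow$ (ii) correctly combines the identity $\pi^{-1}(\pi(A)) = p_1\bigl(R \cap (X \times A)\bigr)$ with the closed-projection (Kuratowski) lemma for a quasi-compact factor, and then descends to $X/\sim$ by the saturation criterion for closedness in a quotient; this is exactly where compactness is spent. For (ii) $\Rightarrow$ (iii), your sandwich lemma $C \subseteq \pi^{-1}\bigl((X/\sim) \setminus \pi(X \setminus U)\bigr) \subseteq U$ for a saturated closed $C$ inside an open $U$ is stated correctly, closedness of $\pi$ is indeed what makes the middle set open, and the verifications you defer really are routine (the left inclusion needs saturation of $C$, the right one is immediate); combined with $T_1$ (to make classes closed) and normality (to separate them), it yields disjoint saturated open sets descending to disjoint neighbourhoods. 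One remark worth promoting from parenthesis to hypothesis in a final write-up: reading ``compact'' in the Bourbaki sense (quasi-compact and Hausdorff) is not cosmetic but essential, since without Hausdorffness of $X$ the implication (ii) $\Rightarrow$ (iii) is false --- for $X$ a two-point indiscrete space with $\sim$ equality, $\pi$ is closed but $X/\sim$ is not Hausdorff --- precisely because $T_1$ and normality are what your separation step consumes. Your cyclic route is essentially the classical argument in the Bourbaki source the paper cites, so it fills the citation in faithfully rather than offering a genuinely different proof.
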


\begin{proposition}[See {\cite[p.~159, Proposition 17]{Bou67b}}] \label{second Bourbaki proposition}
Let $X$ be a compact metrizable space and $\sim$ an equivalence relation on $X$. If the quotient space $X / \sim$ is Hausdorff, then $X / \sim$ is compact metrizable.
\end{proposition}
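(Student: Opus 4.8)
The plan is to prove the two assertions of the proposition—compactness and metrizability of $X/\sim$—separately, treating compactness as essentially immediate and devoting the argument to metrizability. Compactness is disposed of first: by definition of the quotient topology the canonical projection $\pi \colon X \to X/\sim$ is a continuous surjection, and the continuous image of a compact space is compact, so $X/\sim$ is compact. Combined with the standing hypothesis that $X/\sim$ is Hausdorff, this places us in the category of compact Hausdorff spaces; in particular $X/\sim$ is normal, hence completely regular and $T_1$, a fact I will use below.

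The substance is metrizability, and here the plan is to embed $X/\sim$ into the Hilbert cube $[0,1]^{\mathbb{N}}$. Since $X$ is compact metrizable, the space $C(X)$ of continuous real-valued functions on $X$, equipped with the supremum norm, is separable. I then consider the subspace $C_\sim(X)$ consisting of those $f \in C(X)$ that are constant on every $\sim$-equivalence class; these are precisely the functions of the form $h \circ \pi$ with $h \in C(X/\sim)$. As a subset of the separable metric space $C(X)$, the subspace $C_\sim(X)$ is itself separable, so I may fix a countable dense family $\{ h_n \circ \pi \}_{n \in \mathbb{N}}$ in it.

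The key step is to check that the associated family $\{ h_n \}_{n \in \mathbb{N}}$ separates the points of $X/\sim$. Given two distinct classes, Urysohn's lemma applied to the corresponding disjoint closed singletons in the normal space $X/\sim$ yields an $h \in C(X/\sim)$ separating them; hence $h \circ \pi \in C_\sim(X)$ separates the two classes, and by density one of the $h_n \circ \pi$ does as well. After rescaling each (bounded) $h_n$ to take values in $[0,1]$, the resulting map $\Phi \colon X/\sim \to [0,1]^{\mathbb{N}}$ is continuous and, by this separation property, injective. A continuous injection from a compact space into a Hausdorff space is a homeomorphism onto its image; since the Hilbert cube $[0,1]^{\mathbb{N}}$ is metrizable and subspaces of metrizable spaces are metrizable, $X/\sim$ is metrizable, which finishes the proof.

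I expect the main obstacle to be the point-separation step rather than the embedding bookkeeping, because it is precisely here that the Hausdorff hypothesis on $X/\sim$ is indispensable: without it the $\sim$-invariant continuous functions on $X$ need not distinguish distinct equivalence classes, and the entire embedding strategy collapses. The two structural inputs that make a countable separating family available are the separability of $C(X)$—which is where the metrizability of $X$ enters—and the normality supplied by compactness together with the Hausdorff assumption.
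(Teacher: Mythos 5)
Your proof is correct, but note that the paper itself does not prove this proposition at all: it is quoted as a standard fact with a citation to Bourbaki (General Topology, Part~2, p.~159, Proposition~17), so there is no internal argument to compare yours against. Your route---compactness from the continuous surjection $\pi$, then metrizability by embedding $X/\sim$ into the Hilbert cube $[0,1]^{\mathbb{N}}$ via a countable family of $\sim$-invariant continuous functions---is complete and sound. The key steps all check out: $C(X)$ is separable because $X$ is compact metrizable (this is exactly where the metrizability hypothesis on $X$ enters); $C_\sim(X) = \{ h \circ \pi : h \in C(X/\sim) \}$ by the universal property of the quotient topology; separability is hereditary in metric spaces, so a countable dense family $\{ h_n \circ \pi \}$ exists; compact Hausdorff implies normal, so Urysohn's lemma applied to two distinct (closed) singletons, together with density and the identity $\| h_n \circ \pi - h \circ \pi \|_\infty = \| h_n - h \|_\infty$ (valid since $\pi$ is onto), gives the point-separation property; and a continuous injection of a compact space into a Hausdorff space is an embedding. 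Compared with the usual alternatives---Bourbaki's own argument, or invoking the Hanai--Morita--Stone theorem that perfect maps preserve metrizability, or establishing second countability of $X/\sim$ and citing Urysohn's metrization theorem---your functional-analytic embedding buys a self-contained proof resting only on Urysohn's lemma, separability of $C(X)$, and elementary compactness. One cosmetic point: when rescaling each $h_n$ to take values in $[0,1]$, handle constant $h_n$ separately (or compose every $h_n$ with a fixed homeomorphism of $\mathbb{R}$ onto $(0,1)$) to avoid dividing by zero; this does not affect the separation property.
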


\begin{lemma} \label{Sigma is compact metrizable}
The topological space $(\Sigma, \mathcal{T}_\Sigma)$ is compact metrizable.
\end{lemma}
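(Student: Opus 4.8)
The plan is to assemble the preceding results into a short chain of implications, the substantive work having already been carried out in Lemma~\ref{closed equivalence relation}. The guiding observation is that, by Lemma~\ref{g tilde is homeomorphism}, the map $\widetilde{g} \colon \mathbb{N}_\infty^{\mathbb{N}} / \sim_g \to (\Sigma, \mathcal{T}_\Sigma)$ is a homeomorphism. Since both compactness and metrizability are topological invariants, preserved under homeomorphism, it suffices to prove that the quotient space $\mathbb{N}_\infty^{\mathbb{N}} / \sim_g$ is compact metrizable and then transport the conclusion along $\widetilde{g}$.

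To establish this for the quotient, I would first recall from Lemma~\ref{Sigma is metric} that $\mathbb{N}_\infty^{\mathbb{N}}$ is itself compact metrizable. This supplies the ambient hypothesis needed for both Bourbaki propositions. Next, I would verify that the quotient is Hausdorff by invoking Proposition~\ref{first Bourbaki proposition} with $X \coloneqq \mathbb{N}_\infty^{\mathbb{N}}$ and the equivalence relation $\sim_g$: that proposition makes the closedness of the graph of $\sim_g$ equivalent to the Hausdorffness of $\mathbb{N}_\infty^{\mathbb{N}} / \sim_g$, and Lemma~\ref{closed equivalence relation} precisely asserts that the graph $R_g$ is closed in $\mathbb{N}_\infty^{\mathbb{N}} \times \mathbb{N}_\infty^{\mathbb{N}}$. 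Hence the quotient is Hausdorff.

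Finally, with $\mathbb{N}_\infty^{\mathbb{N}}$ compact metrizable and $\mathbb{N}_\infty^{\mathbb{N}} / \sim_g$ now known to be Hausdorff, Proposition~\ref{second Bourbaki proposition} applies directly and yields that $\mathbb{N}_\infty^{\mathbb{N}} / \sim_g$ is compact metrizable. The homeomorphism $\widetilde{g}$ then carries this property over to $(\Sigma, \mathcal{T}_\Sigma)$, completing the argument.

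I do not anticipate a genuine obstacle at this stage, since the difficult step—proving that $R_g$ is closed—was handled separately in Lemma~\ref{closed equivalence relation}. The only point requiring care is bookkeeping: I must confirm that the hypotheses of the two imported propositions match the present setting, namely that Proposition~\ref{first Bourbaki proposition} needs only compactness of $X$ (which we have) to deliver Hausdorffness of the quotient, whereas Proposition~\ref{second Bourbaki proposition} additionally requires $X$ to be compact \emph{metrizable} (also available via Lemma~\ref{Sigma is metric}) before it can upgrade Hausdorffness to compact metrizability of the quotient. Getting these hypothesis levels straight is the entirety of the work remaining.
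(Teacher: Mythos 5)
Your proposal is correct and follows essentially the same route as the paper's own proof: Lemma~\ref{closed equivalence relation} together with Proposition~\ref{first Bourbaki proposition} gives Hausdorffness of the quotient, Proposition~\ref{second Bourbaki proposition} (with compact metrizability of $\mathbb{N}_\infty^{\mathbb{N}}$ from Lemma~\ref{Sigma is metric}) upgrades this to compact metrizability, and the homeomorphism $\widetilde{g}$ of Lemma~\ref{g tilde is homeomorphism} transports the conclusion to $(\Sigma, \mathcal{T}_\Sigma)$. Your attention to which hypotheses each Bourbaki proposition requires matches the paper's usage exactly, so nothing further is needed.
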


\begin{proof}
Since the graph of $\sim_g$ is closed in the product space $\mathbb{N}_\infty^{\mathbb{N}} \times \mathbb{N}_\infty^{\mathbb{N}}$ (Lemma \ref{closed equivalence relation}), we infer, in view of Proposition \ref{first Bourbaki proposition}, that the quotient space $\mathbb{N}_\infty^{\mathbb{N}} / \sim_g$ is Hausdorff. Now, since $\mathbb{N}_\infty^{\mathbb{N}}$ is compact metrizable (Lemma \ref{Sigma is metric}) and since the quotient space $\mathbb{N}_\infty^{\mathbb{N}}/\sim_g$ is Hausdorff, Proposition \ref{second Bourbaki proposition} tells us that $\mathbb{N}_\infty^{\mathbb{N}} / \sim_g$ is compact metrizable. Therefore, we conclude that $(\Sigma, \mathcal{T}_\Sigma)$ is compact metrizable, as a homeomorphic space to $\mathbb{N}_\infty^{\mathbb{N}}/\sim_g$ via the mapping $\widetilde{g} \colon \mathbb{N}_\infty^{\mathbb{N}} / \sim_g \to (\Sigma, \mathcal{T}_\Sigma)$ (Lemma \ref{g tilde is homeomorphism}).
\end{proof}

\begin{lemma} \label{cylinder set is compact}
For each $n \in \mathbb{N}$ and $\sigma \in \Sigma_n$, the subspaces $\Upsilon_\sigma$ and $\Sigma \setminus \Upsilon_\sigma$ of $(\Sigma, \mathcal{T}_\Sigma)$ are compact metrizable.
\end{lemma}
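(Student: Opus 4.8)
The plan is to show that $\Upsilon_\sigma$ is \emph{clopen} in $(\Sigma, \mathcal{T}_\Sigma)$; once this is established, both assertions of the lemma follow immediately. Indeed, by Lemma \ref{Sigma is compact metrizable} the space $(\Sigma, \mathcal{T}_\Sigma)$ is compact metrizable, hence compact Hausdorff, so a subset is compact precisely when it is closed; and every subspace of a metrizable space is metrizable. Thus, if $\Upsilon_\sigma$ is clopen, then $\Upsilon_\sigma$ and its complement $\Sigma \setminus \Upsilon_\sigma$ are both closed, hence compact, and both are metrizable as subspaces of $\Sigma$.

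To prove that $\Upsilon_\sigma$ is clopen, I would first transfer the question back to $\mathbb{N}_\infty^{\mathbb{N}}$. Since $g = \widetilde{g} \circ \pi_g$, where $\pi_g$ is the quotient map and $\widetilde{g}$ is a homeomorphism (Lemmas \ref{g tilde is homeomorphism} and \ref{commutative diagram}), the defining property of the quotient topology shows that a subset $V \subseteq \Sigma$ is open (respectively, closed) in $\mathcal{T}_\Sigma$ if and only if $g^{-1}(V)$ is open (respectively, closed) in $\mathbb{N}_\infty^{\mathbb{N}}$. Hence it suffices to prove that $g^{-1}(\Upsilon_\sigma)$ is clopen in the product space $\mathbb{N}_\infty^{\mathbb{N}}$.

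The crucial computation is the identification
\[
g^{-1}(\Upsilon_\sigma) = \{ \upsilon \coloneqq (\upsilon_k)_{k \in \mathbb{N}} \in \mathbb{N}_\infty^{\mathbb{N}} : \upsilon_k = \sigma_k \text{ for } k = 1, \dotsc, n \}.
\]
To justify it, recall that $\sigma \in \Sigma_n$ forces $\sigma_1, \dotsc, \sigma_n \in \mathbb{N}$, and that $g(\upsilon)$ agrees with $\upsilon$ in every coordinate preceding the first occurrence of $\infty$ in $\upsilon$ and equals $\infty$ from that position onward. The membership $g(\upsilon) \in \Upsilon_\sigma$, i.e.\ $(g(\upsilon))^{(n)} = \sigma$, requires $(g(\upsilon))_k = \sigma_k$ to be finite for each $k \in \{ 1, \dotsc, n \}$; this can happen exactly when the first $\infty$ of $\upsilon$ occurs beyond position $n$, in which case $(g(\upsilon))_k = \upsilon_k$ for all $k \leq n$. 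This yields the displayed equality, with no constraint whatsoever imposed on the tail coordinates $\upsilon_{n+1}, \upsilon_{n+2}, \dotsc$.

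Finally, writing $\pi_k \colon \mathbb{N}_\infty^{\mathbb{N}} \to \mathbb{N}_\infty$ for the $k$-th coordinate projection, the above reads $g^{-1}(\Upsilon_\sigma) = \bigcap_{k=1}^n \pi_k^{-1}(\{\sigma_k\})$. Each $\sigma_k$ is a finite, and therefore isolated, point of $\mathbb{N}_\infty$ (Proposition \ref{definition of rho}), so $\{\sigma_k\}$ is open; it is also closed, since $\mathbb{N}_\infty$ is Hausdorff. Consequently each $\pi_k^{-1}(\{\sigma_k\})$ is clopen, and their finite intersection $g^{-1}(\Upsilon_\sigma)$ is clopen in $\mathbb{N}_\infty^{\mathbb{N}}$, which completes the argument. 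I expect the step needing the most care to be the coordinate-wise identification of $g^{-1}(\Upsilon_\sigma)$ in the previous paragraph: one must invoke $\sigma \in \Sigma_n$ to guarantee the first $n$ entries are finite, thereby excluding every $\upsilon$ whose first $\infty$ falls at or before position $n$ (for which $g$ would overwrite a relevant coordinate with $\infty$), while confirming that no condition is secretly placed on the later coordinates.
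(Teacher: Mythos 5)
Your proof is correct, but it transfers the problem between $\mathbb{N}_\infty^{\mathbb{N}}$ and $\Sigma$ by a different mechanism than the paper does. The paper first upgrades $g$ to a \emph{closed mapping}: by Lemma \ref{closed equivalence relation} and Proposition \ref{first Bourbaki proposition} the canonical projection $\pi_g$ is closed, and since $g = \widetilde{g} \circ \pi_g$ with $\widetilde{g}$ a homeomorphism, so is $g$; it then computes the same clopen cylinder $g^{-1}(\Upsilon_\sigma) = \{\sigma_1\} \times \dotsm \times \{\sigma_n\} \times \mathbb{N}_\infty^{\mathbb{N} \setminus \{1, \dotsc, n\}}$ that you do, and \emph{pushes forward}: $\Upsilon_\sigma = g(g^{-1}(\Upsilon_\sigma))$ and $\Sigma \setminus \Upsilon_\sigma = g(\mathbb{N}_\infty^{\mathbb{N}} \setminus g^{-1}(\Upsilon_\sigma))$ are closed because $g$ is closed and surjective. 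You instead observe that $g$ is a quotient map onto $(\Sigma, \mathcal{T}_\Sigma)$, so openness and closedness can be tested on preimages; since $g^{-1}(\Upsilon_\sigma)$ is a preimage, it is automatically saturated, and its clopenness descends to $\Upsilon_\sigma$ with no further input. The payoff of your route is economy: for this lemma you bypass the closed-mapping machinery entirely, so Lemma \ref{closed equivalence relation} and Proposition \ref{first Bourbaki proposition} enter only indirectly, through Lemma \ref{Sigma is compact metrizable} (which both proofs need for the final step, closed subspaces of a compact metrizable space being compact metrizable). What the paper's heavier approach buys is a more robust tool -- closedness of $g$ pushes forward \emph{arbitrary} closed subsets of $\mathbb{N}_\infty^{\mathbb{N}}$, not just saturated ones -- but that generality is not needed here, and your pullback argument, together with the careful coordinate-wise verification that membership of $g(\upsilon)$ in $\Upsilon_\sigma$ constrains exactly the first $n$ coordinates of $\upsilon$, is complete as stated.
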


\begin{proof}
We first note that, by Lemma \ref{closed equivalence relation} and Proposition \ref{first Bourbaki proposition}, the canonical projection $\pi_g \colon \mathbb{N}_\infty^{\mathbb{N}} \to \mathbb{N}_\infty^{\mathbb{N}} / \sim_g$ is a closed mapping. But $g = \widetilde{g} \circ \pi_g$ by Lemma \ref{commutative diagram}, where $\widetilde{g}$ is a homeomorphism (Lemma \ref{g tilde is homeomorphism}), and this implies that $g$ is also a closed mapping.

Now, fix $n \in \mathbb{N}$ and $\sigma \coloneqq (\sigma_k)_{k \in \mathbb{N}} \in \Sigma_n$. Consider the set
\[
g^{-1}(\Upsilon_\sigma) = \{ \sigma_1 \} \times \dotsm \times \{ \sigma_n \} \times \mathbb{N}_\infty^{\mathbb{N} \setminus \{ 1, \dotsc, n \}},
\]
which is clopen in $\mathbb{N}_\infty^{\mathbb{N}}$. Then, $g^{-1}(\Upsilon_\sigma)$ and $\mathbb{N}_\infty^{\mathbb{N}} \setminus g^{-1}(\Upsilon_\sigma)$ are closed in $\mathbb{N}_\infty^{\mathbb{N}}$, and hence, $\Upsilon_\sigma = g(g^{-1}(\Upsilon_\sigma))$ and $\Sigma \setminus \Upsilon_\sigma = g(\mathbb{N}_\infty^{\mathbb{N}} \setminus g^{-1}(\Upsilon_\sigma))$ are closed in $(\Sigma, \mathcal{T}_\Sigma)$ as $g$ is a closed mapping by the preceding paragraph. Since $(\Sigma, \mathcal{T}_\Sigma)$ is compact metrizable (Lemma \ref{Sigma is compact metrizable}), its closed subspaces are compact metrizable. This completes the proof of the lemma.
\end{proof}

\begin{theorem} \label{phi is continuous}
The mapping $\varphi \colon (\Sigma, \mathcal{T}_\Sigma) \to [0,1]$ is continuous.
\end{theorem}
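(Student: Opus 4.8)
The plan is to deduce the continuity of $\varphi$ from the continuity of $\widetilde{\varphi}$ (Lemma \ref{phi is Lipschitz}) by invoking the universal property of the quotient topology. All the structural facts I need are already assembled in the commutative diagram of Lemma \ref{commutative diagram}: the canonical projection $\pi_g$ is, by construction, a quotient map; $\widetilde{g}$ is a homeomorphism (Lemma \ref{g tilde is homeomorphism}); and $\widetilde{\varphi} = \varphi \circ g = \varphi \circ \widetilde{g} \circ \pi_g$ by Lemma \ref{varphi tilde as composition}.

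First I would reduce the problem to showing that $\varphi \circ \widetilde{g} \colon \mathbb{N}_\infty^{\mathbb{N}} / \sim_g \to [0,1]$ is continuous. Since $\widetilde{g}$ is a homeomorphism, $\widetilde{g}^{-1}$ is continuous, and then $\varphi = (\varphi \circ \widetilde{g}) \circ \widetilde{g}^{-1}$ is continuous as a composite of continuous maps. Next, because $\pi_g$ carries the quotient topology, the universal property of quotient maps says that $\varphi \circ \widetilde{g}$ is continuous precisely when its precomposition with $\pi_g$ is continuous on $\mathbb{N}_\infty^{\mathbb{N}}$. That precomposition is exactly $(\varphi \circ \widetilde{g}) \circ \pi_g = \varphi \circ g = \widetilde{\varphi}$, which is continuous by Lemma \ref{phi is Lipschitz}. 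This closes the argument.

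Equivalently, and perhaps more transparently at the level of the definition of $\mathcal{T}_\Sigma$, for an arbitrary open set $U \subseteq [0,1]$ I would compute the preimage under $g = \widetilde{g} \circ \pi_g$ using $g^{-1}(\varphi^{-1}(U)) = (\varphi \circ g)^{-1}(U) = \widetilde{\varphi}^{-1}(U)$, the last set being open by Lemma \ref{phi is Lipschitz}. Unwinding the definition of the quotient topology on $\mathbb{N}_\infty^{\mathbb{N}} / \sim_g$ and of $\mathcal{T}_\Sigma = \{ \widetilde{g}(V) : V \text{ open in } \mathbb{N}_\infty^{\mathbb{N}} / \sim_g \}$ then shows $\varphi^{-1}(U) \in \mathcal{T}_\Sigma$, which is exactly continuity of $\varphi$.

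I do not anticipate a genuine obstacle here: once $\widetilde{\varphi}$ is known to be continuous and to factor through $g$ (equivalently, to be constant on the fibres of $g$, which holds since $g$ itself is constant on $\sim_g$-classes), the continuity of $\varphi$ is a purely formal consequence of the way $\mathcal{T}_\Sigma$ was engineered to make $\widetilde{g}$ a homeomorphism onto the quotient. The only point demanding a moment's care is the bookkeeping of the identifications $g = \widetilde{g} \circ \pi_g$ and $\widetilde{\varphi} = \varphi \circ g$, so that the universal property is applied to the correct map $\varphi \circ \widetilde{g}$ rather than to $\varphi$ directly.
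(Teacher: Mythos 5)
Your proposal is correct and follows essentially the same route as the paper: both arguments factor $\widetilde{\varphi} = \varphi \circ \widetilde{g} \circ \pi_g$ via Lemmas \ref{varphi tilde as composition}--\ref{commutative diagram}, invoke the definition of the quotient topology (your ``universal property'' is exactly the paper's chain of equivalences on preimages of open sets), use that $\widetilde{g}$ is a homeomorphism, and conclude from the Lipschitz continuity of $\widetilde{\varphi}$ in Lemma \ref{phi is Lipschitz}. Your second, preimage-unwinding formulation is in fact verbatim the computation the paper carries out.
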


\begin{proof}
It is clear that $\widetilde{\varphi} \colon \mathbb{N}_\infty^{\mathbb{N}} \to [0,1]$ is continuous if and only if $\varphi \colon (\Sigma, \mathcal{T}_\Sigma) \to [0,1]$ is continuous. Indeed, for any open subset $U$ of $[0,1]$, we have $\widetilde{\varphi}^{-1}(U) = \pi^{-1} \circ \widetilde{g}^{-1} \circ \varphi^{-1}(U)$ by Lemma \ref{commutative diagram}, so that
\begin{align*}
\widetilde{\varphi}^{-1}(U) \text{ is open in } \mathbb{N}_\infty^{\mathbb{N}}
&\iff \widetilde{g}^{-1} \circ \varphi^{-1}(U) \text{ is open in } \mathbb{N}_\infty^{\mathbb{N}}/\sim_g \\
&\iff \varphi^{-1}(U) \text{ is open in } (\Sigma, \mathcal{T}_\Sigma),
\end{align*}
where the first equivalence follows from the definition of the quotient topology and the second from the fact that $\widetilde{g}$ is a homeomorphism (Lemma \ref{g tilde is homeomorphism}). But then, the continuity of $\widetilde{\varphi}$ (Lemma \ref{phi is Lipschitz}) implies the continuity of $\varphi$.
\end{proof}

We are now ready to present the continuity theorem for the extended continued fraction mapping, which is an analogue of \cite[Lemmas 3.9 and 3.10]{Ahn23}.

\begin{theorem} \label{f continuity theorem}
For the continuity of the mapping $f \colon [0,1] \to (\Sigma, \mathcal{T}_\Sigma)$, the following hold:
\begin{enumerate}[label = \upshape(\roman*), ref = \roman*, leftmargin=*, widest=ii]
\item \label{f is continuous at irrational}
$f$ is continuous at every irrational point and at two rational points $0$ and $1$.
\item \label{f is discontinuous at rational}
$f$ is one-sided continuous only at every rational point in $(0,1)$; precisely, if we let $x \in (0,1) \cap \mathbb{Q}$ and put $\varphi^{-1} ( \{ x \} ) = \{ \sigma, \tau \}$, then
\[
\lim_{\substack{t \to x \\ t \in I_n(\sigma)}} f(t) = \sigma
\quad \text{and} \quad
\lim_{\substack{t \to x \\ t \not \in I_n(\sigma)}} f(t) = \tau,
\]
where $\sigma \in \Sigma_n$ for some $n \in \mathbb{N}$.
\end{enumerate}
\end{theorem}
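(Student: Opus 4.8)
The plan is to treat $f$ as a section of the continuous surjection $\varphi \colon (\Sigma, \mathcal{T}_\Sigma) \to [0,1]$ and to run a single compactness argument in both parts. The key structural facts I will use are: (a) $\varphi \circ f = \id_{[0,1]}$, which holds because $\widetilde{\varphi}(f(x)) = [\overline{d}_1(x), \overline{d}_2(x), \dotsc] = x$ for every $x \in [0,1]$ (recalling that $\varphi = \widetilde{\varphi}$ on $\Sigma$ by \eqref{definition of phi}); (b) $\varphi$ is continuous (Theorem \ref{phi is continuous}); and (c) $(\Sigma, \mathcal{T}_\Sigma)$ is compact metrizable (Lemma \ref{Sigma is compact metrizable}). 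Since the codomain is a compact metric space, I will verify every required limit through the subsequence principle: a sequence converges to $L$ provided every convergent subsequence has limit $L$, using compactness to extract convergent sub-subsequences.

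First I would prove part (\ref{f is continuous at irrational}). Fix $x_0 \in \mathbb{I} \cup \{0,1\}$ and an arbitrary sequence $t_j \to x_0$ in $[0,1]$. Given any subsequence of $(f(t_j))_j$, compactness of $(\Sigma, \mathcal{T}_\Sigma)$ furnishes a convergent sub-subsequence $f(t_{j_k}) \to \eta$. Then $\varphi(\eta) = \lim_k \varphi(f(t_{j_k})) = \lim_k t_{j_k} = x_0$ by (a) and (b), so $\eta \in \varphi^{-1}(\{x_0\})$. Proposition \ref{inverse image of phi}(\ref{inverse image of phi 1}) identifies this fiber as the singleton $\{f(x_0)\}$, whence $\eta = f(x_0)$. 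As every convergent subsequence has the same limit $f(x_0)$, the subsequence principle yields $f(t_j) \to f(x_0)$, establishing continuity at $x_0$.

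For part (\ref{f is discontinuous at rational}), fix $x_0 \in (0,1) \cap \mathbb{Q}$, write $\varphi^{-1}(\{x_0\}) = \{\sigma, \tau\}$ with $\sigma = f(x_0) \in \Sigma_n$, and take $t_j \to x_0$. Exactly as above, every subsequential limit of $(f(t_j))_j$ lies in $\{\sigma, \tau\}$, so the only remaining issue, and the crux of the theorem, is to decide which of the two points is selected. I would settle this using the cylinder set $\Upsilon_\sigma$. A direct inspection of partial quotients shows $f(I_n(\sigma)) \subseteq \Upsilon_\sigma$ and $f([0,1] \setminus I_n(\sigma)) \subseteq \Sigma \setminus \Upsilon_\sigma$, since $t \in I_n(\sigma)$ precisely when the first $n$ entries of $f(t)$ are $\sigma_1, \dotsc, \sigma_n$, i.e., when $f(t)^{(n)} = \sigma$. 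Moreover $\sigma \in \Upsilon_\sigma$ while $\tau \notin \Upsilon_\sigma$, because the explicit form in Proposition \ref{inverse image of phi}(\ref{inverse image of phi 2}) gives $\tau^{(n)} = (\sigma_1, \dotsc, \sigma_{n-1}, \sigma_n - 1, \infty, \infty, \dotsc) \neq \sigma$. By Lemma \ref{cylinder set is compact} both $\Upsilon_\sigma$ and $\Sigma \setminus \Upsilon_\sigma$ are compact, hence closed, in $(\Sigma, \mathcal{T}_\Sigma)$. Consequently, if $t_j \in I_n(\sigma)$ then every subsequential limit lies in the closed set $\Upsilon_\sigma$ as well as in $\{\sigma, \tau\}$, forcing it to be $\sigma$, so $f(t_j) \to \sigma$; and if $t_j \notin I_n(\sigma)$ then every subsequential limit lies in $\Sigma \setminus \Upsilon_\sigma$ and in $\{\sigma, \tau\}$, forcing it to be $\tau$, so $f(t_j) \to \tau$.

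Finally, to justify that $f$ is one-sided continuous \emph{only}, i.e., genuinely discontinuous at $x_0$, I would observe that $x_0 = [\sigma_1, \dotsc, \sigma_n]$ is an endpoint of the fundamental interval $I_n(\sigma)$ lying in the interior of $[0,1]$; hence every neighborhood of $x_0$ meets both $I_n(\sigma)$ and its complement, and the two distinct one-sided limits $\sigma \neq \tau$ witness that $\lim_{t \to x_0} f(t)$ does not exist. The only real obstacle is the fiber-selection step in part (\ref{f is discontinuous at rational}); once Lemma \ref{cylinder set is compact} is invoked to present $\Sigma = \Upsilon_\sigma \sqcup (\Sigma \setminus \Upsilon_\sigma)$ as a clopen partition, the two limits separate automatically, and everything else reduces to the routine subsequence principle together with the continuity of $\varphi$.
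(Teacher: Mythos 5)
Your proposal is correct and takes essentially the same approach as the paper: both arguments hinge on the compact metrizability of $(\Sigma, \mathcal{T}_\Sigma)$ and of $\Upsilon_\sigma$, $\Sigma \setminus \Upsilon_\sigma$ (Lemmas \ref{Sigma is compact metrizable} and \ref{cylinder set is compact}), the continuity of $\varphi$ (Theorem \ref{phi is continuous}), and the fiber description in Proposition \ref{inverse image of phi}, with the paper merely phrasing the subsequence-extraction as a proof by contradiction rather than via the subsequence principle. Your explicit verifications of $\varphi \circ f = \id_{[0,1]}$, of $f(I_n(\sigma)) \subseteq \Upsilon_\sigma$ and $\tau \notin \Upsilon_\sigma$, and of the genuine two-sided discontinuity at rational points are details the paper leaves implicit, but they do not alter the method.
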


\begin{proof}
(\ref{f is continuous at irrational})
In view of Proposition \ref{inverse image of phi}(\ref{inverse image of phi 1}), it is sufficient to show that $f$ is continuous at points $x \in [0,1]$ where $\varphi^{-1}(\{ x \})$ is a singleton. Suppose, for contradiction, that $f$ is not continuous at such a point $x$. Then, there exists a sequence $(x_k)_{k \in \mathbb{N}}$ converging to $x$ such that the corresponding images $f(x_k)$ fail to converge to $f(x)$. Using the compact metrizability of $\Sigma$ (Lemma \ref{Sigma is compact metrizable}), we can extract a subsequence of $(f(x_k))_{k \in \mathbb{N}}$ that converges to some $\tau \in \Sigma$. The continuity of $\varphi$ (Theorem \ref{phi is continuous}) implies that $x = \varphi (\tau)$, and the singleton assumption forces $\tau = f(x)$. This contradicts the initial choice of the sequence $(x_k)_{k \in \mathbb{N}}$, establishing the continuity of $f$ at such points.

(\ref{f is discontinuous at rational})
Suppose, to the contrary, that $\lim_{\substack{t \to x \\ t \in I_n(\sigma)}} f(t) \neq \sigma$. Then, there exists a sequence $(x_k)_{k \in \mathbb{N}}$ in $I_n(\sigma)$ converging to $x$ such that $(f(x_k))_{k \in \mathbb{N}}$ does not converge to $\sigma$. By the compact metrizability of $\Upsilon_\sigma$ (Lemma \ref{cylinder set is compact}), we extract a subsequence of $(f(x_k))_{k \in \mathbb{N}}$ converging to some $\upsilon \in \Upsilon_\sigma$. The continuity of $\varphi$ (Theorem \ref{phi is continuous}) implies that $x = \varphi (\upsilon)$. The doubleton assumption forces $\upsilon = \sigma$ or $\upsilon = \tau$. Since $\tau \not \in \Upsilon_\sigma$, it must be that $\upsilon = \sigma$, which contradicts the initial assumption about the sequence $(x_k)_{k \in \mathbb{N}}$. Thus, $\lim_{\substack{t \to x \\ t \in I_n(\sigma)}} f(t) = \sigma$.

The proof for the second equality is similar. One needs to consider sequences in $\Sigma \setminus \Upsilon_\sigma$ and uses the compact metrizability of this subspace (Lemma \ref{cylinder set is compact}).
\end{proof}

\begin{corollary}
The subspace $\mathbb{I}$ of $[0,1]$ and the subspace $\Sigma_\infty$ of $(\Sigma, \mathcal{T}_\Sigma)$ are homeomorphic via the restriction of $f \colon [0,1] \to (\Sigma, \mathcal{T}_\Sigma)$ to $\mathbb{I}$ and the restriction of $\varphi \colon (\Sigma, \mathcal{T}_\Sigma) \to [0,1]$ to $\Sigma_\infty$, serving as each other's continuous inverses.
\end{corollary}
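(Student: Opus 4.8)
The plan is to verify that the two restrictions are mutually inverse bijections between $\mathbb{I}$ and $\Sigma_\infty$, and that both are continuous; since a continuous bijection whose inverse is also continuous is by definition a homeomorphism, this suffices. I should stress at the outset that neither $\mathbb{I}$ nor $\Sigma_\infty$ is compact, so one cannot take the shortcut "a continuous bijection from a compact space onto a Hausdorff space is a homeomorphism"; both continuities must be established, and this is the only point requiring any care.

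First I would check that the maps land in the claimed codomains. For $x \in \mathbb{I}$, Proposition \ref{inverse image of phi}(\ref{inverse image of phi 1}) gives $f(x) = \sigma \in \Sigma_\infty$, so $f$ restricts to a map $f|_\mathbb{I} \colon \mathbb{I} \to \Sigma_\infty$. Conversely, Proposition \ref{definition of phi n} shows that $\varphi(\sigma) = \lim_{k \to \infty} \widetilde{\varphi}_k(\sigma) \in \mathbb{I}$ for every $\sigma \in \Sigma_\infty$, so $\varphi$ restricts to a map $\varphi|_{\Sigma_\infty} \colon \Sigma_\infty \to \mathbb{I}$.

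Next I would establish that the two restrictions are mutually inverse. For $x \in \mathbb{I}$, Proposition \ref{inverse image of phi}(\ref{inverse image of phi 1}) states that $\varphi^{-1}(\{x\}) = \{f(x)\}$, which yields $\varphi(f(x)) = x$; hence $\varphi|_{\Sigma_\infty} \circ f|_\mathbb{I} = \id_\mathbb{I}$. In the other direction, fix $\sigma \in \Sigma_\infty$ and set $x \coloneqq \varphi(\sigma) \in \mathbb{I}$. Since $\varphi^{-1}(\{x\}) = \{f(x)\}$ is a singleton and $\sigma$ belongs to it, we conclude $\sigma = f(x) = f(\varphi(\sigma))$, so that $f|_\mathbb{I} \circ \varphi|_{\Sigma_\infty} = \id_{\Sigma_\infty}$. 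In particular, both restrictions are bijections.

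Finally, continuity. The map $\varphi|_{\Sigma_\infty}$ is continuous, being the restriction to the subspace $\Sigma_\infty$ of the continuous map $\varphi \colon (\Sigma, \mathcal{T}_\Sigma) \to [0,1]$ of Theorem \ref{phi is continuous}. For $f|_\mathbb{I}$, Theorem \ref{f continuity theorem}(\ref{f is continuous at irrational}) provides continuity of $f$ at every irrational point of $[0,1]$, and continuity of a map on $[0,1]$ at each point of $\mathbb{I}$ descends to continuity of its restriction to the subspace $\mathbb{I}$: given a neighborhood $V$ of $f(x)$ and a neighborhood $U$ of $x$ in $[0,1]$ with $f(U) \subseteq V$, the set $U \cap \mathbb{I}$ is a neighborhood of $x$ in $\mathbb{I}$ with $f(U \cap \mathbb{I}) \subseteq V$. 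With both restrictions continuous and mutually inverse, each is a homeomorphism, completing the proof. As every step is a direct appeal to a previously established result, I expect no genuine obstacle; the proof is essentially a bookkeeping assembly of Propositions \ref{inverse image of phi} and \ref{definition of phi n} together with Theorems \ref{phi is continuous} and \ref{f continuity theorem}.
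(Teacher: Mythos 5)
Your proposal is correct and follows essentially the same route as the paper: the paper's proof likewise treats the bijectivity of $f|_\mathbb{I}$ and $\varphi|_{\Sigma_\infty}$ as standard and then combines Theorem \ref{f continuity theorem}(\ref{f is continuous at irrational}) with Theorem \ref{phi is continuous}. You merely spell out the details the paper leaves implicit (the codomain checks via Propositions \ref{inverse image of phi} and \ref{definition of phi n}, the mutual-inverse verification, and the passage from pointwise continuity on $\mathbb{I}$ to continuity of the restriction), which is a sound elaboration rather than a different argument.
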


\begin{proof}
It is a standard fact that the sets $\mathbb{I}$ and $\Sigma_\infty$ are in a bijective relation via the continued fraction mapping. Therefore, to obtain the corollary, we only need to combine Theorems \ref{f continuity theorem}(\ref{f is continuous at irrational}) and \ref{phi is continuous}.
\end{proof}

In conclusion, the results presented in this paper provide a topological framework that unifies the continuity behavior of the continued fraction mapping at both irrational and rational points. As mentioned in the introduction, this framework can be leveraged to reestablish the results of \cite{RP00}, specifically regarding the continuity of the error-sum function at irrational points and the one-sided continuity of the function at rational points, without relying on the complicated computations traditionally required. While the detailed derivation of these results using the topological approach is beyond the scope of this paper, readers are encouraged to explore this connection further to appreciate the utility of the framework introduced here.

\section*{Acknowledgements}

I would like to express sincere gratitude to the referees for their thorough and insightful review of this paper. Their detailed comments and suggestions have immensely improved the manuscript, particularly in terms of clarity, rigor, and readability.

\end{document}